\theoremstyle{plain}
\newtheorem{thm}{Theorem}[section]
\newtheorem*{thm*}{Theorem}
\newtheorem*{cor*}{Corollary}
\newtheorem{prop}[thm]{Proposition}
\newtheorem{lemma}[thm]{Lemma}
\newtheorem{lem}[thm]{Lemma}
\newtheorem{cor}[thm]{Corollary}
\newtheorem{claim}{Claim}
\newtheorem*{claim*}{Claim}
\theoremstyle{definition}
\newtheorem{defn}[thm]{Definition}
\newtheorem{ex}[thm]{Example}
\newtheorem{rem}[thm]{Remark}
\newtheorem{conj}[thm]{Conjecture}
\newtheorem*{acknowledgments}{Acknowledgments}
\newtheorem*{observation}{Observation}
\theoremstyle{remark}
\newtheorem*{proof of claim}{{\sl Proof of Claim}}
\numberwithin{equation}{thm}
\def\pd{\operatorname{pd}}
\def\Ext{\operatorname{Ext}}
\def\Im{\operatorname{Im}}
\def\Ker{\operatorname{Ker}}
\def\Hom{\operatorname{Hom}}
\def\Tor{\operatorname{Tor}}
\def\Max{\operatorname{Max}}
\def\Ker{\mathrm{Ker}}
\def\Im{\mathrm{Im}}
\def\rank{\mathrm{rank}}
\newcommand{\rmr}{\mathrm{r}}
\newcommand{\rmv}{\mathrm{v}}
\newcommand{\fka}{\mathfrak{a}}
\newcommand{\fkm}{\mathfrak{m}}
\newcommand{\fkn}{\mathfrak{n}}
\newcommand{\fkp}{\mathfrak{p}}
\newcommand{\fkq}{\mathfrak{q}}
\newcommand{\fkM}{\mathfrak{M}}
\newcommand{\fkN}{\mathfrak{N}}
\newcommand{\mapright}[1]{%
\smash{\mathop{%
\hbox to 1cm{\rightarrowfill}}\limits^{#1}}}
\newcommand{\mapleft}[1]{%
\smash{\mathop{%
\hbox to 1cm{\leftarrowfill}}\limits_{#1}}}
\def\height{\mathrm{ht}}
\def\ol{\overline}
\title[The Auslander-Reiten conjecture for certain non-Gorenstein CM rings]{The Auslander-Reiten conjecture for certain non-Gorenstein Cohen-Macaulay rings}
\author{Shinya Kumashiro}
\address{National Institute of Technology, Oyama College, 771 Nakakuki, Oyama, Tochigi, 323-0806, Japan}	
\email{skumashiro@oyama-ct.ac.jp}
\thanks{2020 {\em Mathematics Subject Classification.} 13C40, 13D07, 13H10}
\thanks{{\em Key words and phrases.} Auslander-Reiten conjecture, Cohen-Macaulay ring, Gorenstein ring, complete intersection, determinantal ring, Ulrich ideal}
\thanks{The author was supported by JSPS KAKENHI Grant Numbers JP21K13766 and JP19J10579 and Overseas Study Grant for Graduate Students in the Frontier Science Program of Chiba University.}
\begin{document}

\begin{abstract}
The Auslander-Reiten conjecture is a notorious open problem about the vanishing of Ext modules. In a Cohen-Macaulay complete local ring $R$ with a parameter ideal $Q$, the Auslander-Reiten conjecture holds for $R$ if and only if it holds for the residue ring $R/Q$. 
In the former part of this paper, we study the Auslander-Reiten conjecture for the ring $R/Q^\ell$ in connection with that for $R$, and prove the equivalence of them for the case where $R$ is Gorenstein and $\ell\le \dim R$. 
In the latter part, we generalize the result of the minimal multiplicity by J. Sally. Due to these two of our results, we see that the Auslander-Reiten conjecture holds if there exists an Ulrich ideal whose residue ring is a complete intersection. We also explore the Auslander-Reiten conjecture for determinantal rings.
\end{abstract}

\maketitle

\section{Introduction}\label{section1}
The Auslander-Reiten conjecture and several related conjectures are problems about the vanishing of Ext modules. As is well-known, the vanishing of cohomology plays a very important role in the study of rings and modules. For a guide to these conjectures, one can consult \cite[Appendix A]{CH} and \cite{CT, Ha, W, W2}. 
These conjectures originate from the representation theory of algebras, and the theory of commutative ring  also greatly contributes to the development of the Auslander-Reiten conjecture; see, for examples, \cite{Ara, HL, HSV, JS}.
Let us recall the Auslander-Reiten conjecture over a commutative Noetherian ring $R$.

\begin{conj}(\cite{AR})\label{a1.1}
Let $M$ be a finitely generated $R$-module. 
If 
$\Ext_R^i(M, M\oplus R)=0$ for all $i>0$,
then $M$ is a projective $R$-module.
\end{conj}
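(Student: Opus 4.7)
The plan is to attack the Auslander-Reiten conjecture by reductions to special cases, since it is open in full generality. First I would reduce to the case where $R$ is complete Noetherian local Cohen-Macaulay, using that projectivity is a local property and that the completion is faithfully flat; replacing $M$ by a sufficiently high syzygy preserves the Ext-vanishing hypothesis and lets me assume $M$ is maximal Cohen-Macaulay. In that setting ``projective'' and ``free'' coincide, so the goal reduces to showing that $M$ is free.

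The core step is a dimension induction. For a parameter ideal $Q$ of $R$ that is also $M$-regular, the vanishing of $\Ext^{i}_{R}(M, M \oplus R)$ for $i > 0$ transfers to the vanishing of $\Ext^{i}_{R/Q}(M/QM, M/QM \oplus R/Q)$ for $i > 0$; this is why the conjecture for $R$ and for $R/Q$ are equivalent, and the first contribution of the paper is to extend the equivalence to $R/Q^{\ell}$ for $\ell \le \dim R$ in the Gorenstein case, thereby giving a richer Artinian quotient in which to work. Granting that reduction, I would then try to finish using extra structure: if $R$ has an Ulrich ideal $I$ with $R/I$ a complete intersection, I would transport the Ext-vanishing from $M$ to a module over $R/I$ via the special syzygy behavior of Ulrich ideals, and conclude using the known validity of the AR conjecture over complete intersections. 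For determinantal rings I would use their explicit minimal free resolutions (Eagon-Northcott type) together with a generic deformation argument.

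The main obstacle will be the lifting step at each reduction: Ext-vanishing descends easily along the quotients, but freeness over the quotient only yields freeness over $R$ after additional homological control, typically via Nakayama-type or depth arguments that require careful bookkeeping of syzygies and regular sequences. Making this lifting precise both for the $Q^{\ell}$ reduction (for $\ell$ up to $\dim R$ in the Gorenstein case, where duality is the tool that makes $\ell$ beyond $1$ manageable) and for the Ulrich-ideal transfer is the technical heart of the plan; once those equivalences are in place, the known cases---chiefly complete intersections and the generalized minimal multiplicity bound extending Sally's theorem---do the rest.
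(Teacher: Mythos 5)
The statement you were asked to prove is Conjecture \ref{a1.1} itself, i.e.\ the Auslander--Reiten conjecture for an arbitrary Noetherian ring $R$ and an arbitrary finitely generated module $M$. The paper does not prove this statement and does not claim to: it is an open problem, and the paper only establishes it for particular classes of rings (quotients $S/Q^{\ell}$ of Gorenstein rings by powers of parameter ideals with $\ell\le\dim S$, rings admitting an Ulrich ideal whose residue ring is a complete intersection, and certain determinantal rings). Your proposal in fact concedes this in its first sentence and then proceeds by importing hypotheses that are not part of the statement: Cohen--Macaulayness, Gorensteinness, the existence of an Ulrich ideal $I$ with $R/I$ a complete intersection, or an explicit determinantal presentation. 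None of these is available for a general Noetherian local ring, so the argument never closes; even restricted to Gorenstein rings the conjecture remains open, as the paper itself points out in the introduction.

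Concretely, the gap is that your ``core step'' only reduces the problem along a parameter ideal (or its powers) to an Artinian quotient, and the conjecture for Artinian local rings --- even Artinian Gorenstein rings --- is exactly as open as the general case. The reductions in the paper (Proposition \ref{e2.1}, Theorem \ref{a2.1}, Corollary \ref{c2.10}) are equivalences between the conjecture for $R$ and for certain quotients; they transfer the problem but do not solve it unless the quotient lands in a class where the conjecture is already known (complete intersections, or rings that are complete intersections in codimension one). Your plan correctly describes how the paper exploits this for its special classes, but as a proof of Conjecture \ref{a1.1} as stated it is not a proof at all: there is no step that handles a ring lacking the extra structure you assume. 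The honest conclusion is that the statement has no proof in the paper to compare against, and your proposal should be read as a summary of the paper's partial results rather than as an argument for the conjecture.
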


The conjecture is trivially reduced to the local case, thus let $(R, \fkm)$ be a commutative Noetherian local ring and $M$ a finitely generated $R$-module. One can also see that if the Auslander-Reiten conjecture holds for the completion of $R$, then it holds for $R$ (Remark \ref{remrem23}). Thus, we suppose that $R$ is complete. 
Then, a fundamental result concerning the Auslander-Reiten conjecture is that it remains true after quotienting by a non-zerodivisor, that is, the Auslander-Reiten conjecture holds for $R$ if and only if it holds for $R/xR$, where $x\in \fkm$ is a non-zerodivisor of $R$ (Proposition \ref{e2.1}). It is also known that the Auslander-Reiten conjecture holds for all complete intersections (see \cite[4.2 Theorem]{AB} or this is listed in \cite[Appendix A]{CH}). After that, Huneke, Leuschke, and Araya proved that the conjecture holds for Gorenstein rings which are complete intersections in codimension one (\cite[Theorem 3.]{Ara} and \cite[Theorem 0.1]{HL}). However, it is not known whether the Auslander-Reiten conjecture holds for arbitrary Gorenstein rings.

The conjecture for non-Gorenstein rings is more mysterious. If $R$ is a Gorenstein ring, then the assumption that $\Ext_R^i (M, R)=0$ for all $i>0$ in the conjecture just requires $M$ is a maximal Cohen-Macaulay module. However, in some classes of non-Gorenstein rings, the assumption $\Ext_R^i (M, R)=0$ for all $i>0$ forces $M$ to be free.
Let us note several results on non-Gorenstein rings. Huneke and Leuschke also showed that the Auslander-Reiten conjecture holds for Cohen-Macaulay $\mathbb{Q}$-algebras which are complete intersections in codimension one. Goto, Takahashi, and Taniguchi proved that $\Ext_R^i (M, R)=0$ for all $i>0$ implies $M$ is free for non-Gorenstein almost Gorenstein rings in the sense of \cite{GTT} (\cite[Theorem 0.1]{HL} and \cite[proof of Corollary 4.5.]{GTT}). 

Now let us explain the purpose of this paper. In this paper, we compare $R$ and $R/Q^\ell$ in the study of the Auslander-Reiten conjecture, where $Q$ is an ideal of $R$ generated by a regular sequence on $R$ (Theorem \ref{a2.1}).
It is well-known that the Auslander-Reiten conjecture holds for $R$ if and only if it holds for $R/Q^\ell$ if $\ell=1$. However, $R/Q^\ell$ is neither Gorenstein nor reduced if $\ell>1$, thus Theorem \ref{a2.1} provides new classes of rings where the Auslander-Reiten conjecture holds. Here let us note one application from Theorem \ref{a2.1} to determinantal rings, which is an important class of rings.

Let $s$, $t$ be positive integers and  $A[\mathbf{X}]=A[X_{ij}]_{1\le i\le s, 1\le j\le t}$ a polynomial ring over a commutative ring $A$. 
Assume $s\le t$ and let $\mathbb{I}_s(\mathbf{X})$ 
denote the ideal of $A[\mathbf{X}]$ generated by the $s\times s$ minors of the $s\times t$ matrix $\left(X_{ij}\right)$. Then $A[\mathbf{X}]/\mathbb{I}_s(\mathbf{X})$ is called a {\it determinantal ring} of $A$. With these assumptions and notations, we have the following.

\begin{thm}{\rm (Theorem \ref{f2.7})}
Suppose that $A$ is a complete intersection for all localizations at maximal ideals. Then the Auslander-Reiten conjecture holds for the determinantal ring $A[\mathbf{X}]/\mathbb{I}_s(\mathbf{X})$ if $2s\le t+1$.
\end{thm}

In the latter part of this paper, we study a new class of rings arising from Theorem \ref{a2.1}, that is, the class of rings $R$ that $R/\fkq\cong S/Q^2$ for some parameter ideal $\fkq$ of $R$, complete intersection $S$, and parameter ideal $Q$ of $S$.
We will see that the condition is characterized by an ideal condition, and the condition is strongly related to the existence of certain Ulrich ideals. Here the notion of Ulrich ideals given by \cite{GOTWY} is a generalization of maximal ideals of rings possessing minimal multiplicity. 
It is known that Ulrich ideals enjoy many good properties, see \cite{GOTWY, GTT2} and \cite[Theorem 1.2]{GIK}. The ubiquity and existence of Ulrich ideals are also studied (\cite{GIK, GOTWY2, GTT2}). In the current paper, we study the existence of Ulrich ideals whose residue rings are complete intersections. As a goal of this paper, we have the following. Let $v$ denote the embedding dimension $\mu_R(\fkm)$ of $R$, where $\mu_R(M)$ denotes the number of elements in a minimal system of generators of $M$. We denote by $r$ the Cohen-Macaulay type $\ell_R(\Ext_R^d(R/\fkm, R))$ of $R$, where $\ell_R(M)$ denotes the length of an $R$-module $M$ (\cite[Definition 1.2.15]{BH}).

\begin{thm}{\rm (Corollary \ref{c2.10})}\label{f1.5}
Let $(R, \fkm)$ be a Cohen-Macaulay local ring of dimension $d$. Suppose that there exists an Ulrich ideal whose residue ring is a complete intersection. 
Then the following assertions are true.
\begin{enumerate}[{\rm (1)}] 
\item $r+d\le v$.
\item There exists an isomorphism of rings $R/\fkq\cong S/Q^2$ for some parameter ideal $\fkq$ of $R$, local complete intersection $S$ of dimension $r$, and parameter ideal $Q$ of $S$.
\item The Auslander-Reiten conjecture holds for $R$.
\item Suppose that there are a regular local ring $T$ of dimension $v$ and a surjective ring homomorphism $T\to R$.  Let
$0\to F_{v-d}\to \cdots \to F_1\to F_0 \to R \to 0$
be a minimal $T$-free resolution of $R$. Then
$$
\rank_T F_0=1\ \  \text{and}\ \  \rank_T F_i=\sum_{j=0}^{v-r-d} \beta_{i-j}{\cdot}\binom{v-r-d}{j}
$$
for $1\le i\le v-d$, where $\beta_k=
\begin{cases}
1 & \text{if $k=0$}\\
k{\cdot}\binom{r+1}{k+1} & \text{if $1\le k\le r$}\\
0& \text{otherwise.}
\end{cases}$
\end{enumerate}
\end{thm}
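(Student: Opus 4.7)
All four assertions flow from the structural description in (2), so my plan is to prove (2) first and then deduce the others from it.

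For (2), I would unwind the Ulrich hypothesis. By definition of an Ulrich ideal, the given $I$ contains a parameter ideal $\fkq$ of $R$ (a minimal reduction) satisfying $I^2=\fkq I$ with $I/\fkq$ a free $R/I$-module. In $R/\fkq$ the image of $I$ is therefore a free, square-zero $R/I$-module, and since $R/I$ is a complete intersection of dimension $0$ by hypothesis, $R/\fkq$ has exactly the shape of $S/Q^2$ for $S$ a local complete intersection with parameter ideal $Q$ satisfying $S/Q\cong R/I$. The ``latter part'' of the paper (as announced in the introduction) develops an ideal-theoretic translation producing $S$ and $Q$ explicitly; the dimension of $S$ is forced to be $r$ because the free rank of $I/\fkq$ over $R/I$ matches $\mu(Q)=\dim S$ under the isomorphism, and this rank equals $\dim_k\Soc(R/\fkq)-\dim_k\Soc(R/I)$, i.e.~the type of $R$ minus that of $R/I$.

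Given (2), assertion (3) follows by chaining three inputs: the Auslander-Reiten conjecture holds for every local complete intersection $S$ by Proposition~\ref{e2.1} (reducing modulo the regular sequence defining $S$ over a regular local ring); Theorem~\ref{a2.1} applied with $\ell=2\le r=\dim S$ transfers it from $S$ to $S/Q^2\cong R/\fkq$; and Proposition~\ref{e2.1} applied to the regular sequence $\fkq$ lifts it from $R/\fkq$ to $R$. The degenerate cases $r\le 1$ put $R$ into the regular or Gorenstein regime where the conjecture is already known. For (1), I would compare invariants across $R/\fkq\cong S/Q^2$: the socle of $R/\fkq$ has $k$-dimension $r$, while the socle of $S/Q^2$ contains the image of $Q/Q^2$ of $k$-dimension at least $\dim S=r$; combining with the embedding-dimension identity $\mu(\fkm_R/\fkq)=\mu(\fkm_S/Q^2)=\mathrm{embdim}(S)\ge r$ and the upper bound $\mu(\fkm_R/\fkq)\le v-d$ (an equality when $\fkq$ is generated by part of a minimal generating set of $\fkm_R$) yields $r+d\le v$.

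For (4), I would apply the Künneth change-of-rings identity twice. Choose lifts $x_1,\dots,x_d\in T$ of a minimal generating set of $\fkq$ (extending to a regular system of parameters of $T$), and set $T_1:=T/(x_1,\dots,x_d)$, regular local of dimension $v-d$. Since $(x_1,\dots,x_d)$ is both $T$-regular and $R$-regular, reducing the minimal $T$-free resolution of $R$ modulo $(x_1,\dots,x_d)$ yields the minimal $T_1$-free resolution of $R/\fkq\cong S/Q^2$, so $\beta_i^T(R)=\beta_i^{T_1}(S/Q^2)$. Next, $\mathrm{embdim}(S)=v-d$ together with $\dim S=r$ give $S\cong T_1/(f_1,\dots,f_{v-r-d})$ for a regular sequence $(f_j)$; a second Künneth yields $\beta_i^{T_1}(S/Q^2)=\sum_j \beta_{i-j}^S(S/Q^2)\binom{v-r-d}{j}$. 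Finally, $\beta_k^S(S/Q^2)=k\binom{r+1}{k+1}$ for $1\le k\le r$ is the classical Betti sequence of the square of a parameter ideal in a local complete intersection of dimension $r$, derivable from the long exact Tor sequence of $0\to Q/Q^2\to S/Q^2\to S/Q\to 0$ using $Q/Q^2\cong(S/Q)^r$ and $\beta_i^S(S/Q)=\binom{r}{i}$. Substitution then produces the asserted convolution. The main obstacle is step (2): producing $S$, $Q$, and the isomorphism from the Ulrich data is the heart of the argument and depends on the paper's ideal-theoretic characterization; the remaining parts reduce to known results, invariant-juggling, and standard Künneth/Koszul bookkeeping.
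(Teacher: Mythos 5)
Your overall architecture coincides with the paper's: the structure result is obtained from the ideal-theoretic translation (Proposition \ref{a2.6} and Theorem \ref{a2.3} applied via Proposition \ref{b2.8}), the Auslander--Reiten assertion follows by chaining Proposition \ref{e2.1} and Theorem \ref{a2.1} with $\ell=2$ (this is Corollary \ref{a2.2}), and the Betti numbers come from reducing modulo a regular sequence and an Eagon--Northcott computation (Theorem \ref{a2.7}). However, your argument for $r+d\le v$ has a genuine gap. You invoke the ``upper bound'' $\mu(\fkm/\fkq)\le v-d$, but this inequality points the wrong way: $\mu(\fkm/\fkq)=\dim_k \fkm/(\fkq+\fkm^2)\ge v-d$, with equality precisely when the $d$ generators of $\fkq$ stay independent modulo $\fkm^2$. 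A minimal reduction $\fkq$ of the Ulrich ideal $I$ is forced to lie inside $I$, and $I$ may be contained in $\fkm^2$ (take $f=x^2$, $g=y^2$, $h=z^2$ in Proposition \ref{b2.10}); then $\mu(\fkm/\fkq)=v$ and your comparison only yields $r\le v$. (Also, $Q/Q^2$ is not contained in $\Soc(S/Q^2)$ unless $Q$ equals the maximal ideal of $S$, since $\fkn\cdot(Q/Q^2)\cong(\fkn/Q)^{\oplus r}$.) The paper's proof runs through $\mu_R(I)$ instead: Lemma \ref{a2.5} gives $\mu_R(I)=d+\rmr(R)/\rmr(R/I)=d+r$ via $I/\fkq\cong\Hom_R(R/I,R/\fkq)$, and because $R/I$ is a complete intersection its defining ideal $J$ in a regular presentation $T\twoheadrightarrow R$ with $\dim T=v$ is generated by a regular sequence of length $v$, whence $d+r=\mu_R(I)\le\mu_T(J)=v$. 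Your proposal contains neither input, and I do not see how to recover the $d$ from the isomorphism $R/\fkq\cong S/Q^2$ alone.

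Three smaller corrections. The free rank of $I/\fkq$ over $R/I$ is the quotient $\rmr(R)/\rmr(R/I)$, not the difference of the types; your difference would give $r-1$, while Lemma \ref{a2.5} gives $r$ because $\rmr(R/I)=1$. In the degenerate case $r\le 1$ of the Auslander--Reiten assertion, the conjecture is not ``already known in the Gorenstein regime''---it is open for general Gorenstein rings---but the case is still covered because $R/\fkq$ is then an Artinian complete intersection, hence so is $R$. Finally, your ``second K\"unneth'' in (4) is not a general change-of-rings identity; it is valid here because the extra elements $y_{d+r+1},\dots,y_v$ can be chosen to form a regular sequence on the Cohen--Macaulay ring $T/[X_1+X_2^2]$, which is exactly the mapping-cone step of Theorem \ref{a2.7}; note also that $T/(x_1,\dots,x_d)$ need not be regular (again $\fkq$ may sit in $\fkm^2$), but the paper's argument never requires it to be.
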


The assertions (1) and (2) of Theorem \ref{f1.5} claim that the existence of Ulrich ideal induces an inequality of invariants of rings, and determines the structure of the ring. Furthermore, Theorem \ref{f1.5} (4) recovers the result of J. Sally \cite[Theorem 1(iii)]{S} by choosing the maximal ideal $\fkm$ as a certain Ulrich ideal.

Let us fix our notations throughout this paper. In what follows, unless otherwise specified, let $R$ denote a Cohen-Macaulay local ring with the maximal ideal $\fkm$. We denote by $\widehat{-}$ the $\fkm$-adic completion. For each finitely generated $R$-module $M$, let $\mu_R (M)$ (resp. $\ell_R (M)$) denote the number of elements in a minimal system of generators of $M$ (resp. the length of $M$). 
If $M$ is a Cohen-Macaulay $R$-module, $\rmr_R (M)$ denotes the Cohen-Macaulay type $\ell_R(\Ext_R^d(R/\fkm, R))$ of $M$ (\cite[Definition 1.2.15]{BH}). Let $\rmv(R)$ (resp. $\rmr(R)$) denote the embedding dimension of $R$ (resp. the Cohen-Macaulay type of $R$). For convenience, letting $M$ and $N$ be $R$-modules, $\Ext_R^{>0} (M, N)=0$ (resp. $\Tor_{>0}^R (M, N)=0$) denotes $\Ext_R^{i} (M, N)=0$ for all $i>0$ (resp. $\Tor_{i}^R (M, N)=0$ for all $i>0$).


\section{powers of parameter ideals and determinantal rings}\label{section2}

The purpose of this section is to prove Theorem \ref{a2.1} and apply it to determinantal rings. 
First of all, let us sketch a proof of the fact that quotienting by a non-zerodivisor preserves the truth of the Auslander-Reiten conjecture (Proposition \ref{e2.1}). Indeed, Theorem \ref{a2.1} is based on this fact.

\begin{lem}\label{nn21} {\rm (\cite[Lemma 1.3.5]{BH})}
Let $(R, \fkm)$ be a Noetherian local ring and $M$ be a finitely generated $R$-module. 
Let $x\in \fkm$ be a non-zerodivisor of $R$ and $M$. Let $\ol{*}$ denote $R/(x)\otimes_R *$. Then, $\pd_R M=\pd_{\ol{R}} \ol{M}$.
\end{lem}


\begin{prop}\label{e2.1} {\rm (\cite[Theorem 4.5(1)]{CT}, \cite[Lemma 1.1]{HL})}
Let $(R, \fkm)$ be a Noetherian local ring and $x\in \fkm$ be a non-zerodivisor of $R$. Consider the following conditions.
\begin{enumerate}[\rm(1)] 
\item The Auslander-Reiten conjecture holds for $R$.
\item The Auslander-Reiten conjecture holds for $R/xR$.
\end{enumerate}
Then {\rm (2) $\Rightarrow$ (1)} holds. {\rm (1)  $\Rightarrow$ (2)} also holds if $R$ is complete.
\end{prop}

\begin{proof}
(2) $\Rightarrow$ (1): Let $M$ be a finitely generated $R$-module such that $\Ext_R^{>0}(M, M\oplus R)=0$. 
Take an exact sequence $0\to X\to F\to M\to 0$, where $F$ is a free $R$-module of rank $\mu_R (M)$. By applying the functors 
\begin{center}
$\Hom_R (-, R)$,\quad $\Hom_R (M, -)$, and\quad $\Hom_R (-, X)$
\end{center}
to the above short exact sequence, we have 
\begin{center}
$\Ext_R^{>0}(X, R)=0$,\quad $\Ext_R^{i}(M, M) \cong \Ext_R^{i+1}(M, X)$, and\quad $\Ext_R^{i}(X, X) \cong \Ext_R^{i+1}(M, X)$
\end{center}
 for all $i\ge 1$. Thus, $\Ext_R^{>0}(X, X\oplus R)=0$. Note that $x$ is a non-zerodivisor of $X$ because of an exact sequence $0\to X\to F$. Hence, by applying the functor $\Hom_R(-, X\oplus R)$ to the short exact sequence $0 \to X\xrightarrow{ x} X \to \ol{X} \to 0$, where $\ol{*}$ denotes $R/(x)\otimes_R *$, we obtain that $\Ext_R^i(\ol{X}, X\oplus R)=0$ for all $i\ge 2$. By the result of Rees (\cite[Lemma 3.1.16]{BH}), it follows that $\Ext_{\ol{R}}^{>0}(\ol{X}, \ol{X}\oplus \ol{R})=0$.
Therefore, the $\ol{R}$-module $\ol{X}$ is $\ol{R}$-free  and the $R$-module $X$ is also $R$-free by Lemma \ref{nn21}. We then again apply the $R$-dual $(-)^*$ to the short exact sequence $0\to X\to F\to M\to 0$. Thus, we get $0 \to M^* \to F^* \to X^* \to 0$. Since $X$ is $R$-free, this exact sequence splits; hence, $M^*$ is $R$-free. 
By applying the $R$-dual to the above exact sequence, we have the exact sequence $0\to X^{**}\to F^{**}\to M^{**}\to 0$.
Therefore, we have a commutative diagram
\[
\xymatrix{
0 \ar[r] & X\ar[r] \ar[d]_{\varphi_X}  &F \ar[r] \ar[d]_{\varphi_F} & M\ar[r] \ar[d]_{\varphi_M} & 0\\
0 \ar[r] & X^{**}\ar[r]  & F^{**} \ar[r] & M^{**} \ar[r] & 0, \\
}
\]
where $\varphi_N: N\to N^{**}$ denotes the canonical homomorphism defined by $y\mapsto (N^*\to~R;f\mapsto f(y))$. By the freeness of $X$ and $F$, $\varphi_X$ and $\varphi_F$ are isomorphisms. It follows that $\varphi_M$ is an isomorphism by the snake lemma. Thus, $M\cong M^{**}$ is $R$-free since $M^*$ is $R$-free.

(2)  $\Rightarrow$ (1): Let $N$ be a finitely generated $\ol{R}$-module such that $\Ext_{\ol{R}}^{>0}(N, N\oplus \ol{R})=0$. Then there exists a finitely generated $R$-module $M$ such that $M/xM\cong N$ and $\Tor_{>0}^R (M, \ol{R})=0$ by \cite[Proposition 1.7.]{ADS} since $R$ is complete. By applying the functor $M\otimes_R -$ to the exact sequence $0 \to R \xrightarrow{ x} R\to \ol{R} \to 0$, we have an exact sequence 
\[
\Tor_1^R(M, \ol{R})=0 \to M \xrightarrow{ x} M \to \ol{M} \to 0,
\] 
that is, $x$ is a non-zerodivisor of $M$. Hence, by applying the functor $\Hom_R(-, M\oplus R)$ to the exact sequence $0\to M\xrightarrow{x} M\to N\to 0$, we have 
\[
\Ext_R^i(M, M\oplus R) \xrightarrow{x} \Ext_R^i(M, M\oplus R) \to \Ext_R^{i+1}(N, M\oplus R)
\]
for all $i\ge 1$. By \cite[Lemma 3.1.16]{BH}, it follows that $\Ext_R^{i+1}(N, M\oplus R) \cong \Ext_{\ol{R}}^i(N, N\oplus \ol{R}) =0$. Therefore, by applying Nakayama's lemma to the above exact sequence, we obtain that $\Ext_R^{>0}(M, M\oplus~R)=0$.
Hence, $M$ is free and so is the $\ol{R}$-module $N$ by Lemma \ref{nn21}.
\end{proof}

\begin{rem}\label{remrem23}
Let $R$ be a Noetherian local ring. If the Auslander-Reiten conjecture holds for $\widehat{R}$, then it holds for $R$.
\end{rem}

\begin{proof}
Let $M$ be a finitely generated $R$-module such that $\Ext_R^{>0}(M, M\oplus R)=0$. Then, $\Ext_{\widehat{R}}^{>0}(\widehat{M}, \widehat{M}\oplus \widehat{R})=0$. It follows that $\widehat{M}$ is $\widehat{R}$-free. Thus, $M$ is $R$-free.
\end{proof}

The following is a key to show Theorem \ref{a2.1}.

\begin{lemma}\label{ob1}
Let $(R, \fkm)$ be a Noetherian local ring and $I$ an $\fkm$-primary ideal of $R$. Then we have the following.
\begin{enumerate}[{\rm (1)}] 
\item  Suppose that $R/I$ is a Gorenstein ring. If $\Ext_R^{>0} (M, R/I)=0$, then $\Tor_{>0}^R (M, R/I)=0$.
\item Suppose that $N$ is a finitely generated $R/I$-module. If $\Tor_{>0}^R (M, R/I)=0$, then $\Ext_R^{i} (M, N)\cong\Ext_{R/I}^{i} (M/IM, N)$ for all $i\in \mathbb{Z}$.
\end{enumerate}
\end{lemma}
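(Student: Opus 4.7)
The plan is to prove the two parts by separate standard techniques: part (2) by a direct change-of-rings argument at the level of resolutions, and part (1) by invoking the self-injectivity of the Artinian Gorenstein ring $R/I$.

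For (2), I would take a resolution $F_\bullet \to M$ by finitely generated free $R$-modules. The hypothesis $\Tor_{>0}^R(M,R/I)=0$ is exactly the statement that $F_\bullet \otimes_R R/I$ remains acyclic in positive degrees, hence gives a free resolution of $M/IM$ over $R/I$. Because $N$ is annihilated by $I$, the adjunction $\Hom_R(F_i, N)\cong \Hom_{R/I}(F_i\otimes_R R/I, N)$ is compatible with the differentials, so the cochain complexes $\Hom_R(F_\bullet, N)$ and $\Hom_{R/I}(F_\bullet\otimes_R R/I, N)$ coincide. Taking cohomology yields the asserted isomorphism $\Ext_R^i(M, N)\cong \Ext_{R/I}^i(M/IM, N)$ for all $i\ge 0$; for $i<0$ both sides vanish, so the assertion is vacuous there.

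For (1), I would exploit that $R/I$ is an Artinian local Gorenstein ring (since $I$ is $\fkm$-primary), hence self-injective. Thus $D:=\Hom_{R/I}(-,R/I)$ is exact on finitely generated $R/I$-modules, and is moreover faithful there, because the socle of any nonzero finitely generated $R/I$-module embeds $R/I$-linearly into the one-dimensional socle of $R/I$. Using the same resolution $F_\bullet\to M$, the adjunction $\Hom_R(F_i, R/I)\cong D(F_i\otimes_R R/I)$ expresses the complex computing $\Ext_R^\bullet(M,R/I)$ as the image under $D$ of the complex computing $\Tor_\bullet^R(M, R/I)$. Exactness of $D$ then lets it commute with homology, giving
\[
\Ext_R^i(M, R/I)\;\cong\; D\bigl(\Tor_i^R(M, R/I)\bigr)\qquad (i\ge 0).
\]
The hypothesis $\Ext_R^{>0}(M,R/I)=0$ together with faithfulness of $D$ forces $\Tor_{>0}^R(M,R/I)=0$.

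I anticipate no real obstacle: the substantive inputs are the self-injectivity and faithful Matlis duality of the Artinian local Gorenstein ring $R/I$ used in (1), and the standard preservation of resolutions under a Tor-acyclic change of rings used in (2). The only bookkeeping point is to resolve $M$ by finitely generated frees so that each $\Tor_i^R(M,R/I)$ is a finitely generated $R/I$-module, where faithfulness of $D$ is valid.
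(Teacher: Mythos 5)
Your proof is correct and follows essentially the same route as the paper: both parts rest on the adjunction identifying $\Hom_R(F_\bullet,-)$ with $\Hom_{R/I}(F_\bullet\otimes_R R/I,-)$ for a finite free resolution $F_\bullet\to M$, with (2) using that Tor-vanishing makes $F_\bullet\otimes_R R/I$ a free resolution of $M/IM$, and (1) using the self-injectivity of the Artinian Gorenstein ring $R/I$. The paper phrases (1) as recovering $F_\bullet\otimes_R R/I$ by double duality rather than via exactness-plus-faithfulness of $\Hom_{R/I}(-,R/I)$, but this is the same substantive input.
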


\begin{proof} 
Let $M$ be a finitely generated $R$-module and $F_{\bullet} \to M \to 0$ a minimal $R$-free resolution of $M$.

(1): Note that
\[
0\to \Hom_R (M, R/I) \to \Hom_R (F_{\bullet}, R/I)  \ \ \text{ and } 
\]
\[
0\to \Hom_{R/I} (M/IM, R/I) \to \Hom_{R/I} (F_{\bullet}/IF_{\bullet}, R/I) 
\]
are isomorphic as complexes. By the assumption that $\Ext_R^{>0} (M, R/I)=0$, the former complex is exact, and therefore the latter complex is also exact. We also note that $R/I$ is Artinian Gorenstein, that is, the injective dimension of $R/I$ is zero. Hence, by applying the $(R/I)$-dual to the latter exact sequence above, $\Hom_{R/I}(\Hom_{R/I}(F_{\bullet}/I F_{\bullet}, R/I), R/I) \to \Hom_{R/I}(\Hom_{R/I}(M/IM, R/I), R/I) \to 0$ is exact. By using the Matlis dual (see for example \cite[Theorem 3.2.13]{BH}), we obtain that $F_{\bullet}/I F_{\bullet} \to M/IM \to 0$ is an exact sequence. It follows that $\Tor_{>0}^R (M, R/I)=0$.

(2): Since
\[
0\to \Hom_R (M, N) \to \Hom_R (F_{\bullet}, N)  \ \ \text{ and } 
\]
\[
0\to \Hom_{R/I} (M/IM, N) \to \Hom_{R/I} (F_{\bullet}/IF_{\bullet}, N) 
\]
are isomorphic as complexes and $F_{\bullet}/I F_{\bullet} \to M/IM \to 0$ is a minimal $R/I$-free resolution of $M/IM$ by the assumption, $\Ext_R^{i} (M, N)\cong\Ext_{R/I}^{i} (M/IM, N)$ for all $i\in \mathbb{Z}$.
\end{proof}

\begin{lem}\label{nnn24}
Let $S$ be a Noetherian ring and $x_1, x_2, \dots, x_n$ be a regular sequence on $S$. Set $Q=(x_1, x_2, \dots, x_n)$. 
Then, $Q^{i-1}/Q^{i}$ is an $S/Q$-free module of rank $\binom{i-1+n-1}{n-1}$ for all $i>0$. 
\end{lem}

\begin{proof}
Let $(S/Q)[X_1, \dots, X_n]$ be the $\mathbb{Z}$-graded polynomial ring over $S/Q$ with $\deg a=0$ for $a\in S/Q$ and  $\deg X_k =1$ for $1 \le k \le n$. For a $\mathbb{Z}$-graded module $N$, we denote by $N_i$ the set of homogeneous elements in $N$ of degree $i$. By \cite[Theorem 1.1.8]{BH}, we then have the isomorphism 
\[
Q^{i-1}/Q^{i} \cong (S/Q)[X_1, \dots, X_n]_{i-1}
\] 
as $S/Q$-modules. It follows that $Q^{i-1}/Q^{i}$ is an $S/Q$-free module and the rank of $Q^{i-1}/Q^{i}$ is the number of monomials of degree $i-1$ for all $i>0$. Hence, we obtain that 
\[
Q^{i-1}/Q^{i} \cong (S/Q)^{\oplus \binom{i-1+n-1}{n-1}}
\] 
for all $i>0$. 
\end{proof}

\begin{thm}\label{a2.1}
Let $(S, \fkn)$ be a Gorenstein complete local ring and $x_1, x_2, \dots, x_n$ be a regular sequence on $S$. 
Set $Q=(x_1, x_2, \dots, x_n)$. 
Then the following conditions are equivalent.
\begin{enumerate}[{\rm (1)}] 
\item The Auslander-Reiten conjecture holds for $S$.
\item The Auslander-Reiten conjecture holds for $S/Q$.
\item There is an integer $\ell>0$ such that the Auslander-Reiten conjecture holds for $S/Q^{\ell}$.
\item For all integers $1\le \ell \le n$, the Auslander-Reiten conjecture holds for $S/Q^{\ell}$.
\end{enumerate}

\end{thm}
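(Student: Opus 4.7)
The plan is to close the cycle $(1) \Rightarrow (2) \Rightarrow (4) \Rightarrow (3) \Rightarrow (1)$. The equivalence $(1) \Leftrightarrow (2)$ follows by iterating Proposition \ref{e2.1} along the regular sequence $x_1,\dots,x_n$, and $(4) \Rightarrow (3)$ is immediate (take $\ell = 1$). So it remains to prove $(2) \Rightarrow (4)$ and $(3) \Rightarrow (1)$.

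For $(2) \Rightarrow (4)$, set $R = S/Q^\ell$ and $J = Q/Q^\ell$, so $J^\ell = 0$ and $R/J = S/Q$ is Gorenstein; because $x_1,\dots,x_n$ is a regular sequence, $\gr_Q S$ is a polynomial ring over $S/Q$, so each $J^i/J^{i+1}$ is a free $S/Q$-module. Given $M$ with $\Ext_R^{>0}(M, M\oplus R)=0$, the $J$-adic filtration of $R$ together with standard long exact sequences yields $\Ext_R^{>0}(M, S/Q)=0$; Lemma \ref{ob1}(1) then gives $\Tor_{>0}^R(M, S/Q)=0$, and Lemma \ref{ob1}(2) gives $\Ext_{S/Q}^{>0}(\overline M, S/Q)=0$ for $\overline M := M/JM$. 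The crucial step is to promote $\Ext_R^{>0}(M, M)=0$ to $\Ext_R^{>0}(M, \overline M)=0$: pick a minimal $R$-free resolution $F_\bullet \to M$; since each $F_i$ is finitely generated free, there is an isomorphism of complexes $\Hom_R(F_\bullet, M) \otimes_R S/Q \cong \Hom_R(F_\bullet, \overline M)$. Each term $\Hom_R(F_i, M)$ is a finite direct sum of copies of $M$ and therefore has vanishing positive Tor with $S/Q$, so ordinary and derived tensor coincide; the hypercohomology spectral sequence
\[
E_2^{p,q} \;=\; \Tor^R_{-p}\!\bigl(\Ext_R^q(M,M),\,S/Q\bigr) \;\Longrightarrow\; \Ext_R^{p+q}(M, \overline M)
\]
collapses in positive total degree, because $\Ext_R^{>0}(M,M)=0$ kills the rows with $q>0$ while negative Tor kills the columns with $p>0$. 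Thus $\Ext_{S/Q}^{>0}(\overline M, \overline M \oplus S/Q) = 0$ via Lemma \ref{ob1}(2); applying AR for $S/Q$ makes $\overline M$ free over $S/Q$, and a Nakayama argument using $J \subseteq \rad R$ together with $\Tor_1^R(M, S/Q)=0$ upgrades this to freeness of $M$ over $R$.

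For $(3) \Rightarrow (1)$, let $X$ be a finitely generated $S$-module with $\Ext_S^{>0}(X, X\oplus S) = 0$; since $S$ is Gorenstein, $X$ is maximal Cohen--Macaulay, so $x_1,\dots,x_n$ is regular on $X$ and $\Tor_{>0}^S(X, S/Q) = 0$. Induction on $\ell$ via $0 \to Q^{\ell-1}/Q^\ell \to S/Q^\ell \to S/Q^{\ell-1} \to 0$, whose left-hand term is a free $S/Q$-module, yields $\Tor_{>0}^S(X, R) = 0$. Applying the same spectral-sequence device to $\Hom_S(F_\bullet, S) \otimes_S R$ and $\Hom_S(F_\bullet, X) \otimes_S R$, with $F_\bullet \to X$ a minimal $S$-free resolution, gives $\Ext_R^{>0}(X/Q^\ell X,\, R \oplus X/Q^\ell X) = 0$; AR for $R$ forces $X/Q^\ell X$ to be free over $R$, hence $X/QX$ is free over $S/Q$, and one further Nakayama step on the minimal $S$-resolution of $X$ (using $\Tor_{>0}^S(X, S/Q) = 0$) shows that $X$ itself is free over $S$. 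The main obstacle is the spectral-sequence step that transfers vanishing of $\Ext^{>0}_R(M, M)$ to vanishing of $\Ext^{>0}_R(M, \overline M)$; once that is in place, the remaining reductions amount to bookkeeping with long exact sequences, Lemma \ref{ob1}, and Nakayama-type lifting.
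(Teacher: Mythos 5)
Your reduction to the two implications $(2)\Rightarrow(4)$ and $(3)\Rightarrow(1)$ matches the paper's structure, and your $(3)\Rightarrow(1)$ reaches conclusions the paper also reaches (there one climbs the filtration \emph{upward}, where ordinary long exact sequences genuinely suffice). But in $(2)\Rightarrow(4)$ there is a gap at exactly the point where the paper works hardest. You assert that ``the $J$-adic filtration of $R$ together with standard long exact sequences yields $\Ext_R^{>0}(M,S/Q)=0$.'' This does not follow from the long exact sequences alone: the sequences $0\to Q^{i-1}/Q^i\to S/Q^i\to S/Q^{i-1}\to 0$ only produce degree-shifting isomorphisms such as $\Ext_R^{j}(M,S/Q^{\ell-1})\cong \Ext_R^{j+1}(M,S/Q)^{\oplus\binom{\ell+n-2}{n-1}}$, which by themselves give no vanishing. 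The paper first cuts down to the Artinian case (so that lengths are finite) and then compares lengths across all the sequences to get $E_{j+1}\le\frac{\ell-1}{n}\,E_j$ for $E_j=\ell_R(\Ext_R^j(M,S/Q))$; this forces $E_j=0$ for $j\gg0$ \emph{precisely because} $\ell\le n$, and a descending induction finishes. Your proposal never invokes $\ell\le n$, which is the only hypothesis separating assertion (4) from the unknown statement for arbitrary powers, so no ``standard'' argument can be doing this work.

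The second problem is that your spectral-sequence device is not merely unjustified but false as stated. The complex $\Hom_R(F_\bullet,M)$ lives in cohomological degrees $\ge 0$, i.e. it is unbounded in the direction that is fatal for left-derived tensor products; termwise Tor-acyclicity therefore does not identify $\Hom_R(F_\bullet,M)\otimes_R S/Q$ with the derived tensor product, and the resulting second-quadrant spectral sequence need not converge to $\Ext_R^{p+q}(M,\ol{M})$. Concretely, take $S=k[[x]]$, $Q=(x)$, $\ell=2$, $R=k[x]/(x^2)$, $M=k$, and run the identical device with $R$ in the second slot: each $\Hom_R(F_i,R)$ is free (hence Tor-acyclic), and $\Ext_R^{>0}(k,R)=0$ since $R$ is self-injective, so the claimed collapse would yield $\Ext_R^{>0}(k,\,R\otimes_R S/Q)=\Ext_R^{>0}(k,k)=0$, which is false. (This example has $\ell>n$, but your spectral-sequence step never sees $\ell$ or $n$, so it is refuted.) The paper avoids this entirely: having established $\Tor_{>0}^R(M,S/Q)=0$, it tensors the filtration with $M$ to obtain the exact sequences $0\to(M/QM)^{\oplus\binom{i+n-2}{n-1}}\to M/Q^iM\to M/Q^{i-1}M\to 0$ and then repeats the \emph{same} length-counting argument, starting from $\Ext_R^{>0}(M,M/Q^\ell M)=\Ext_R^{>0}(M,M)=0$, to get $\Ext_R^{>0}(M,M/QM)=0$. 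Both of your problematic steps need to be replaced by that counting argument (and by the preliminary reduction to the Artinian case that makes it legitimate).
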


\begin{proof}
$(1) \Leftrightarrow (2)$ follows from Proposition \ref{e2.1}, and $(4) \Rightarrow (3)$ is trivial. Hence we have only to show that $(1)\Rightarrow (4)$ and $(3)\Rightarrow (1)$. First of all, we reduce our assertions to the case where $Q$ is a parameter ideal of $S$. Indeed, assume that $n<\dim S$. 
Set $R=S/Q^{\ell}$. Since $Q^\ell$ is generated by $\ell \times \ell$-minors of the $\ell \times (n+\ell-1)$ matrix 
\begin{equation*}
\left(
\begin{array}{ccccccccc}
x_1 &x_2&x_3&\cdots &x_{n-1}&x_n&&&\\
&x_1&x_2&\cdots&\cdots&x_{n-1} &x_{n}&&\\
&&\ddots&\ddots&\ddots&\ddots &\ddots&\ddots&\\
&&&x_1&x_2&x_{3}&\cdots&x_{n-1} &x_{n}\\
\end{array}
\right),
\end{equation*}
we have $\pd_{S}S/Q^\ell =n$ (see \cite{EN} or \cite[(2.14) Proposition]{BV2}). Hence, $R$ is a Cohen-Macaulay local ring with $\dim R=\dim S-n$ since $Q^\ell$ is perfect in the sense of \cite[Definition 1.4.15]{BH}. 
Thus, if $Q$ is not a parameter ideal of $S$, we have $\dim R>0$. 
Then we can choose $a\in \fkn$ so that $a$ is a non-zerodivisor of $R$ and $S/Q$. It follows that the truth of the Auslander-Reiten conjecture for $S$ and $S/aS$ are equivalent by Proposition \ref{a2.1}. The truth of the Auslander-Reiten conjecture for $R$ and $R/aR$ are also equivalent. Therefore, for each proof of $(1)\Rightarrow (4)$ and $(3)\Rightarrow (1)$, by replacing $R$ and $S$ by $R/aR$ and $S/aS$ respectively and recursively, we finally may assume that $R$ is an Artinian local ring, that is, $Q$ is a parameter ideal of $S$. 

Suppose that $Q$ is a parameter ideal of $S$. We may also assume that $n\ge 2$ and $\ell \ge 2$ by Proposition \ref{e2.1}. 

We consider exact sequences
\begin{align*}
0\to Q^{i-1}/Q^{i}\to S/Q^{i}\to S/Q^{i-1}\to 0 
\end{align*}
of $R$-modules for all $2\le i \le \ell$. By Lemma \ref{nnn24}, we obtain that  $Q^{i-1}/Q^{i} \cong (S/Q)^{\oplus \binom{i-1+n-1}{n-1}}$ as $S/Q$-modules. Therefore, we can rephrase the above exact seuqnces as follows:
\begin{align}\label{nnn242}
0\to (S/Q)^{\oplus \binom{i-1+n-1}{n-1}} \to S/Q^{i}\to S/Q^{i-1}\to 0 
\end{align}
are exact for all $2\le i \le \ell$. Note that $R$ appears in the middle of the exact sequence \eqref{nnn242} for $i=\ell$.

$(1)\Rightarrow (4)$: 
Assume $2\le \ell \le n$. Suppose that $M$ is a finitely generated $R$-module such that $\Ext_R^{>0}(M, M\oplus R)=0$.
We will show that $\Ext_{S/Q}^{>0} (M/QM, M/QM\oplus S/Q)=0$ in several steps.

\begin{claim}\label{claim1}
$\Ext_R^{>0}(M, S/Q)=0$.
\end{claim}

\begin{proof}[Proof of {\rm Claim \ref{claim1}}]
By applying the functor $\Hom_R (M, -)$ to the exact sequences \eqref{nnn242}, we have the following exact sequences
\begin{align}\label{xx2}
\begin{split}
\cdots \to &\Ext_R^{j} (M, S/Q)^{\oplus \binom{i-1+n-1}{n-1}} \to \Ext_R^{j} (M, S/Q^{i}) \to \Ext_R^{j} (M, S/Q^{i-1})\\
 \to & \Ext_R^{j+1} (M, S/Q)^{\oplus \binom{i-1+n-1}{n-1}} \to \cdots 
 \end{split}
\end{align}
of $R$-modules for all $2\le i\le \ell-1$ and $j>0$. For $i=\ell$, since $\Ext_R^{>0}(M, R)=0$, we have isomorphisms
\begin{align}\label{nn244}
\Ext_R^{j} (M, S/Q^{\ell-1})\cong \Ext_R^{j+1} (M, S/Q)^{\oplus \binom{\ell-1+n-1}{n-1}}
\end{align}
for all $j>0$.

Set $E_j=\ell_R (\Ext_R^{j} (M, S/Q))$ for $j>0$. Then, by (\ref{xx2}) and \eqref{nn244}, 
\begin{small}
\[
\begin{split}
E_{j+1}{\cdot}\binom{\ell-1+n-1}{n-1}&=\ell_R (\Ext_R^{j} (M, S/Q^{\ell-1}))\\
&\le E_j{\cdot}\binom{\ell-2+n-1}{n-1} + \ell_R (\Ext_R^{j} (M, S/Q^{\ell-2}))\\
&\le E_j{\cdot}\left\{ \binom{\ell-2+n-1}{n-1} + \binom{\ell-3+n-1}{n-1}\right\} + \ell_R (\Ext_R^{j} (M, S/Q^{\ell-3})) \\
&\le \cdots \\
& \le E_j{\cdot}\sum_{i=0}^{\ell-2}\binom{i+n-1}{n-1} = E_j{\cdot}\binom{\ell-2+n}{n},
\end{split}
\]
\end{small}
where the first equality follows from \eqref{nn244} and the first inequality follows from \eqref{xx2} for $i=\ell-1$ and the fact that $\ell_R(Y) =\ell_R(\Im f) +\ell_R(\Ker g) \le \ell_R(X) + \ell_R(Z)$ for an exact sequence $X\xrightarrow{f} Y \xrightarrow{g} Z$. The second, third, and fourth inequalities also follow in the same way as the first inequality by using \eqref{xx2} for $i=\ell-2, \dots, 2$ recursively. Therefore, by the above inequalities, we obtain that 
\[
E_{j+1}\le \frac{\binom{\ell+n-2}{n}}{\binom{\ell+n-2}{n-1}}{\cdot}{E_j}=\frac{\ell-1}{n}{\cdot}{E_j}
\]
for all $j>0$. Hence, for large enough integer $m\ge 0$, 
\[
E_{m+1}\le \left(\frac{\ell-1}{n}\right)^m E_1<1
\]
since $\ell\le n$. Hence $\Ext_R^j(M, S/Q)=0$ for all $j> m$. 
On the other hand, for $j>0$, $\Ext_R^{j+1}(M, S/Q)=0$ implies that $\Ext_R^{j} (M, S/Q^{\ell-1})=0$ by \eqref{nn244}. It follows that $\Ext_R^{j} (M, S/Q^{i})=0$ for all $1\le i \le \ell-1$ by looking at (\ref{xx2}) for $i=\ell-1, \ell-2, \dots, 2$ recursively. In conclusion, 
$\Ext_R^{j+1}(M, S/Q)=0$ implies that $\Ext_R^{j}(M, S/Q)=0$ for all $j>0$.
By using descending induction, $\Ext_R^j(M, S/Q)=0$ for all $j>0$.
\end{proof}

By Lemma \ref{ob1}(1) and Claim \ref{claim1}, we get $\Tor_{>0}^{R} (M, S/Q)=0$. Moreover, by applying the functor $M\otimes_R -$ to the sequences \eqref{nnn242}, we obtain
\begin{align}\label{xx3}
\Tor_{>0}^{R} (M, S/Q^i)=0 \ \ \text{ for all $1\le i \le \ell-1$}. 
\end{align}
Apply the functor $M\otimes_R -$ to \eqref{nnn242} again. Then we also get the exact sequence 
\begin{align}\label{xx4}
0\to (M/QM)^{\oplus \binom{i-1+n-1}{n-1}} \to M/Q^i M \to M/Q^{i-1} M \to 0 
\end{align}
of $R$-modules for all $2\le i \le \ell$ by (\ref{xx3}). Therefore, by applying the functor $\Hom_R (M, -)$ to (\ref{xx4}), we get the following exact sequence and isomorphism:
\begin{align*}
\cdots &\to \Ext_R^{j} (M, M/QM)^{\oplus \binom{i-1+n-1}{n-1}} \to \Ext_R^{j} (M, M/Q^{i}M) \to \Ext_R^{j} (M, M/Q^{i-1}M) \\
&\to \Ext_R^{j+1} (M, M/QM)^{\oplus \binom{i-1+n-1}{n-1}} \to \cdots 
\end{align*}
of $R$-modules for all $2\le i\le \ell-1$, $j>0$ and
\[
\Ext_R^{j} (M, M/Q^{\ell-1}M)\cong \Ext_R^{j+1} (M, M/QM)^{\oplus \binom{\ell-1+n-1}{n-1}} 
\]
for all $j>0$. By setting $E'_j=\ell_R (\Ext_R^{j} (M, M/QM))$ for all $j>0$ and calculation like the proof of Claim \ref{claim1}, we have $\Ext_R^{>0}(M, M/QM)=0$. This induces that $\Ext_{S/Q}^{>0}(M/QM, M/QM)=0$ by (\ref{xx3}) and Lemma \ref{ob1} (2). 
Thus we have 
\[
\Ext_{S/Q}^{>0}(M/QM, M/QM\oplus S/Q)=0
\]
since $S/Q$ is Artinian Gorenstein, whence $M/QM$ is $S/Q$-free by Proposition \ref{e2.1}. This shows that $M$ is $R$-free since $\Tor_1^R (M, S/Q)=0$ by (\ref{xx3}).

$(3)\Rightarrow (1)$: Let $N$ be a finitely generated $S$-module and suppose that $\Ext_S^{>0}(N, N\oplus S)=0$. Then, by applying the functor $\Hom_S (N, -)$ to the exact sequence $0\to S\xrightarrow{x_1} S \to S/x_1 S \to 0$ of $S$-modules, $\Ext_S^{>0} (N, S/x_1 S)=0$. Hence 
\begin{align}\label{xx5}
\Ext_S^{>0} (N, S/Q)=0 
\end{align}
by induction on $n$. Similarly, $\Ext_S^{>0}(N, N/QN)=0$ since $N$ is a maximal Cohen-Macaulay $S$-module by $\Ext_S^{>0}(N, S)=0$.
Hence $\Tor_{>0}^{S} (N, S/Q)=0$ by Lemma \ref{ob1}(1). Moreover, by applying the functor $N\otimes_S -$ to the sequences \eqref{nnn242}, 
\begin{align}\label{xx6}
\Tor_{>0}^{S} (N, S/Q^i)=0\ \  \text{ for all $1\le i \le \ell$}. 
\end{align}
Therefore, we obtain from \eqref{nnn242} that
\begin{align}\label{xx7}
0\to (N/QN)^{\oplus \binom{i-1+n-1}{n-1}} \to N/Q^i N \to N/Q^{i-1} N \to 0 
\end{align}
is exact as $S$-modules for all $1\le i \le \ell$.
Hence, by applying the functor $\Hom_S (N, -)$ to (\ref{xx7}), 
$$\Ext_S^{>0} (N, N/Q^i N)=0\ \  \text{ for all $1\le i \le \ell$}$$
 since $\Ext_S^{>0}(N, N/QN)=0$. Thus $\Ext_{S/Q^\ell}^{>0} (N/Q^\ell N, N/Q^\ell N)=0$ by Lemma \ref{ob1}(2) and (\ref{xx6}). Similarly to the above argument, we have $\Ext_S^{>0} (N, S/Q^\ell)=0$ from \eqref{nnn242} and (\ref{xx5}). Whence $\Ext_{S/Q^\ell}^{>0} (N/Q^\ell N, S/Q^\ell\oplus N/Q^\ell N)=0$. Hence $N/Q^\ell N$ is $S/Q^\ell$-free, whence $N$ is $S$-free by (\ref{xx6}).
\end{proof}

The following assertions are direct consequences of Theorem \ref{a2.1}.

\begin{cor}\label{nn25}
Let $S$ be a Gorenstein complete local ring and $Q$ a parameter ideal of $S$ generated by a regular sequence on $S$.
Then the Auslander-Reiten conjecture holds for $S$ if and only if it holds for $S/Q^2$.

\end{cor}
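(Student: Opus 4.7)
The plan is to derive this corollary as a direct specialization of Theorem \ref{a2.1} to the case $\ell = 2$, handling only the edge case $n = 1$ separately via Proposition \ref{e2.1}. No new machinery is required; the equivalence of (1) and (3) in Theorem \ref{a2.1} is already stated for an unrestricted choice of $\ell > 0$, and the inclusion $\ell = 2$ among the indices $1 \le \ell \le n$ in condition (4) holds as soon as $n \ge 2$.

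For the ``if'' direction, assume the Auslander-Reiten conjecture holds for $S/Q^2$. Then condition (3) of Theorem \ref{a2.1} is satisfied with witness $\ell = 2$, and the implication $(3) \Rightarrow (1)$ of that theorem immediately yields the Auslander-Reiten conjecture for $S$. This direction works uniformly regardless of the length $n$ of the regular sequence generating $Q$.

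For the ``only if'' direction, assume the Auslander-Reiten conjecture holds for $S$. If $n \ge 2$, then $\ell = 2$ lies in the range $1 \le \ell \le n$, and the implication $(1) \Rightarrow (4)$ of Theorem \ref{a2.1} delivers the Auslander-Reiten conjecture for $S/Q^2$. If instead $n = 1$, so that $Q = (x_1)$ with $x_1$ a non-zerodivisor on $S$, then $Q^2 = (x_1^2)$ is again generated by a single non-zerodivisor, and Proposition \ref{e2.1} applied to $x_1^2 \in \mathfrak n$ gives the equivalence of the Auslander-Reiten conjecture for $S$ and for $S/(x_1^2) = S/Q^2$.

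There is essentially no obstacle; the only thing to watch is the small bookkeeping point that $\ell = 2$ must fit into the range used in condition (4) of Theorem \ref{a2.1}, which is why the case $n = 1$ has to be extracted and treated by Proposition \ref{e2.1}. Once this is observed, the corollary is immediate.
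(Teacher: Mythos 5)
Your proof is correct and matches the paper's intent: the paper states this corollary without proof as a direct consequence of Theorem \ref{a2.1}, and your derivation via $(3)\Rightarrow(1)$ (witness $\ell=2$) and $(1)\Rightarrow(4)$ (with $\ell=2\le n$) is exactly that. Your separate treatment of the case $n=1$, where $Q^2=(x_1^2)$ is principal and Proposition \ref{e2.1} applies directly, is a sensible and necessary precaution that the paper leaves implicit, since $\ell=2$ falls outside the range allowed in condition (4) when $n=1$.
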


\begin{cor}
Let $S$ be a Gorenstein complete local ring which is a complete intersection in codimension one. Let $x_1, x_2, \dots , x_n$ be a regular sequence on $S$ and set $Q=(x_1, x_2, \dots , x_n)$. 
Then the Auslander-Reiten conjecture holds for $S/Q^\ell$ for all $1\le \ell \le n$.
\end{cor}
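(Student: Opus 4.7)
The plan is to combine Theorem \ref{a2.1} with the Huneke--Leuschke--Araya theorem quoted in the introduction. Specifically, by \cite[Theorem 3.]{Ara} and \cite[Theorem 0.1]{HL}, the Auslander--Reiten conjecture is known to hold for any Gorenstein local ring which is a complete intersection in codimension one, so it holds for $S$ itself.

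Having secured condition (1) of Theorem \ref{a2.1}, I would then simply invoke the implication $(1)\Rightarrow(4)$ of that theorem applied to the regular sequence $x_1,\dots,x_n$ and the ideal $Q=(x_1,\dots,x_n)$: this directly yields that the Auslander--Reiten conjecture holds for $S/Q^{\ell}$ for every $1\le \ell\le n$, which is exactly the claim.

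There is essentially no obstacle here, since both ingredients are already in hand: Theorem \ref{a2.1} was proved in full generality for Gorenstein $S$ (with no codimension-one hypothesis), and the hypothesis ``complete intersection in codimension one'' is used only to invoke the external input that the conjecture holds for $S$. The statement is thus a formal corollary; the only thing to verify is that the hypotheses of Theorem \ref{a2.1} are satisfied, namely that $S$ is Gorenstein local and $x_1,\dots,x_n$ is a regular sequence, both of which are given.
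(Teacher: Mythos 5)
Your proposal is correct and is exactly the intended argument: the paper states this corollary as a direct consequence of Theorem \ref{a2.1}, with the codimension-one hypothesis serving only to guarantee, via the cited results of Araya and Huneke--Leuschke, that condition (1) of that theorem holds for $S$. Nothing further is needed.
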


\begin{cor}\label{cccc29}
Let $S$ be a local complete intersection (not necessary complete). Let $x_1, x_2, \dots , x_n$ be a regular sequence on $S$ and set $Q=(x_1, x_2, \dots , x_n)$. 
Then the Auslander-Reiten conjecture holds for $S/Q^\ell$ for all $1\le \ell \le n$.
\end{cor}

\begin{proof}
We note that $\widehat{S}$ is a complete intersection by \cite[Theorem 2.3.3]{BH}, thus the Auslander-Reiten conjecture holds for $\widehat{S}$ (\cite[4.2 Theorem]{AB}). Since $Q\widehat{S}$ is generated by a regular sequence of $\widehat{S}$, the conjecture holds for $\widehat{S}/Q^\ell\widehat{S}\cong \widehat{(S/Q^\ell)}$ for all $1\le \ell \le n$. It follows that the conjecture holds for $S/Q^\ell$ for all $1\le \ell \le n$ by Remark \ref{remrem23}.
\end{proof}

\begin{cor}\label{a2.2}
Let $R$ be a Cohen-Macaulay local ring. Suppose that there exists an isomorphism $R/\fkq\cong S/Q^2$ for some parameter ideal $\fkq$ of $R$, local complete intersection $S$ of dimension $r$, and parameter ideal $Q$ of $S$.
Then the Auslander-Reiten conjecture holds for $R$.
\end{cor}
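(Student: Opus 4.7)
The plan is to combine Proposition \ref{e2.1} with Theorem \ref{a2.1}. Since $R$ is Cohen-Macaulay and $\fkq$ is a parameter ideal, its generators form a regular sequence on $R$, so iterating Proposition \ref{e2.1} reduces the Auslander-Reiten conjecture for $R$ to the same conjecture for $R/\fkq$. Using the hypothesized isomorphism $R/\fkq \cong S/Q^2$, it therefore suffices to establish the conjecture for $S/Q^2$.

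Next, I would set up the hypotheses of Theorem \ref{a2.1} for the pair $(S, Q)$. Since $S$ is a local complete intersection of dimension $r$, it is in particular Gorenstein and Cohen-Macaulay, and $Q$ is generated by a system of parameters $x_1, \dots, x_r$, which is a regular sequence on $S$. Thus Theorem \ref{a2.1} applies with $n = r$. The Auslander-Reiten conjecture holds for any complete intersection, as one may present $S$ as a quotient of a regular local ring by a regular sequence and reduce to the regular case by iterating Proposition \ref{e2.1}; hence condition (1) of Theorem \ref{a2.1} is satisfied by $S$. When $r \ge 2$, conclusion (4) of Theorem \ref{a2.1} with $\ell = 2$ then yields the Auslander-Reiten conjecture for $S/Q^2$, and we are done.

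The only remaining obstacle is a bookkeeping one: the indexing restriction $1 \le \ell \le n$ in Theorem \ref{a2.1} excludes the cases $r = 0$ and $r = 1$, which I would dispatch directly. If $r = 0$, then $Q = 0$ and $S/Q^2 = S$ is itself a complete intersection. If $r = 1$, then $Q = (x_1)$ and $x_1^2$ is a nonzerodivisor on $S$; presenting $S$ as a regular local ring modulo a regular sequence and adjoining a lift of $x_1^2$ to that sequence exhibits $S/Q^2$ as a complete intersection. In both exceptional cases the conjecture for $S/Q^2$ is therefore immediate, and combining with the reduction of the first paragraph delivers the result for $R$. The substantive content is entirely packaged in Theorem \ref{a2.1}; the corollary is essentially a direct application once the edge cases in low dimension are accounted for.
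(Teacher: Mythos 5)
Your proof is correct and follows exactly the route the paper intends: the paper states Corollary \ref{a2.2} as a direct consequence of Theorem \ref{a2.1} (applied with $n=r$, $\ell=2$, using that complete intersections satisfy the conjecture via Proposition \ref{e2.1}) together with the reduction modulo the parameter ideal $\fkq$. Your explicit treatment of the edge cases $r=0$ and $r=1$, which fall outside the range $\ell\le n$ of Theorem \ref{a2.1}(4), is a point the paper leaves implicit, and your disposal of them is correct (for $r=1$ one could even more simply apply Proposition \ref{e2.1} to the nonzerodivisor $x_1^2$).
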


\begin{proof}
The Auslander-Reiten conjecture holds for $S/Q^2$ by Corollary \ref{cccc29}. Since $R/\fkq\cong S/Q^2$, the conjecture holds for  $R/\fkq$. Hence, the Auslander-Reiten conjecture holds for $R$ by using Proposition \ref{e2.1} (2) $\Rightarrow$ (1) recursively.
\end{proof}

In Section \ref{section3}, we will characterize rings obtained in Corollary \ref{a2.2} by the existence of ideals in $R$. In the remainder of this section, we explore the Auslander-Reiten conjecture for determinantal rings. We start with the following.

\begin{prop}\label{f2.6}
Let $s$, $t$ be positive integers and assume that $2s\le t+1$. Let $\alpha_{ij}$ be positive integers for all $1\le i\le s$ and $1\le j\le t$.
Suppose that $S$ is a complete Gorenstein local ring and $\{ x_{ij} \}_{1\le i\le s, 1\le j\le t}$ forms a regular sequence on $S$.
Let $I=\mathbb{I}_s (x_{ij}^{\alpha_{ij}})$ be an ideal of $S$ generated by $s\times s$ minors of the $s\times t$ matrix $(x_{ij}^{\alpha_{ij}})$.
Set $R=S/I$. 
Then the Auslander-Reiten conjecture holds for $S$ if and only if it holds for $R$.

\end{prop}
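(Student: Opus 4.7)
The plan is to use Theorem~\ref{a2.1} as the main tool. Setting $n:=t-s+1$, the hypothesis $2s\le t+1$ is equivalent to $s\le n$, which is exactly the admissible range of exponents $\ell=s$ in Theorem~\ref{a2.1}.

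First I would build the ``Hankel model.'' Choose new variables $u_1,\dots,u_n$, form a Gorenstein local extension $T$ of $S$ in which $\{u_1,\dots,u_n\}$ is a regular sequence, and set $Q=(u_1,\dots,u_n)$. The Hankel-matrix identity from the proof of Theorem~\ref{a2.1} gives $Q^s=I_s(H)$ for the $s\times t$ matrix $H=(u_{i+j-1})$, so Theorem~\ref{a2.1} applied with $\ell=s\le n$ yields that the Auslander-Reiten conjecture holds for $T$ if and only if it holds for $T/Q^s$.

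The central step is to transfer this equivalence across to the pair $(S,R)$. Both $I_s((x_{ij}^{\alpha_{ij}}))$ and $Q^s$ are perfect ideals of grade $n$ in their ambient Gorenstein local rings, since the entries of both matrices form regular sequences of the required lengths, and the Eagon--Northcott complex resolves each of $R$ and $T/Q^s$ with matching Betti numbers. My plan is to construct a larger Gorenstein local ring containing both the $x_{ij}^{\alpha_{ij}}$ and the Hankel entries $u_{i+j-1}$ as independent regular sequences, and then to apply Proposition~\ref{e2.1} iteratively: modding out one regular sequence at a time passes from $T/Q^s$ to $R$ (and from $T$ to $S$), preserving the Auslander-Reiten condition at each step.

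The main obstacle will be this bridging construction: the matrices $(x_{ij}^{\alpha_{ij}})$ and $H$ have $st$ versus $n$ distinct entries respectively, so the two determinantal quotients are not isomorphic, and at each intermediate stage one must show that the element being killed is regular both on the ambient Gorenstein ring and on the relevant determinantal quotient. The hypothesis $2s\le t+1$ is crucial in two places: it is what makes the Hankel identity $Q^s=I_s(H)$ available inside Theorem~\ref{a2.1}, and it guarantees that the intermediate Cohen-Macaulay quotients retain the correct codimension so that Proposition~\ref{e2.1} continues to apply.
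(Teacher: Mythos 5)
Your high-level strategy is the right one --- recognize $Q^{s}$ as the ideal of $s\times s$ minors of the banded matrix, invoke Theorem~\ref{a2.1} with $\ell=s\le n=t-s+1$, and move between rings with Proposition~\ref{e2.1} --- but the step you yourself flag as ``the main obstacle'' is exactly the heart of the proof, and your plan for it does not work as stated. You propose to connect $R=S/\mathbb{I}_s(x_{ij}^{\alpha_{ij}})$ to an auxiliary model $T/Q^{s}$ built from \emph{new} variables $u_1,\dots,u_n$ by ``modding out one regular sequence at a time,'' but you never specify which elements are to be killed, and the observation that the Eagon--Northcott complex gives both rings matching Betti numbers provides no chain of quotients between two non-isomorphic rings. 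Any candidate sequence would have to consist of elements like $u_k-x_{1k}^{\alpha_{1k}}$ and differences of matrix entries, and proving that such elements are regular on the intermediate determinantal quotients is precisely the content that is missing.

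The missing idea is that no auxiliary ring is needed: one specializes the matrix \emph{in place}. Kill the differences $x_{ij}-x_{i+1\,j+1}$ for $(i,j)$ in the band (so that entries along each diagonal become equal) together with the entries $x_{ij}$ lying outside the band of width $t-s+1$; call the resulting ideal $Q'$. These elements, together with $x_{11},\dots,x_{1\,t-s+1}$, generate the same ideal as all the $x_{ij}$, so they extend to a maximal regular sequence on $S$, and a Cohen--Macaulay dimension count shows they form a regular sequence on $R$ as well, since $R/Q'R\cong S/\bigl[(x_{11},\dots,x_{1\,t-s+1})^{s}+Q'\bigr]$ is again Cohen--Macaulay of the correct dimension. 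This lands you directly in the situation of Theorem~\ref{a2.1} (with $s\le t-s+1$), and Proposition~\ref{e2.1} transports the Auslander--Reiten condition along $Q'$ on both $S$ and $R$; the ``two non-isomorphic determinantal quotients'' problem you worry about never arises. Separately, you do not address the general exponents $\alpha_{ij}$: the paper reduces them to the case $\alpha_{ij}=1$ by an induction dividing out the nonzerodivisor $x_{11}$ (alternatively one can note that $\{x_{ij}^{\alpha_{ij}}\}$ is itself a regular sequence and run the same specialization on it), but some such reduction must be made explicit.
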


\begin{proof}
First of all, we show the case where $\alpha_{ij}=1$ for all $1\le i\le s$ and $1\le j\le t$.
Set 
\begin{align*}
A&=\left\{ (i, j)\in \mathbb{Z}^2 \mid 1\le i\le s, 1\le j\le t \right\},\\
B&=\bigcup_{1\le i\le s-1}\left\{ (i, i+k)\in A \mid 0\le k\le t-s \right\},\text{ and}\\
C&=B\cup \left\{ (s, s+k)\in A \mid 0\le k\le t-s \right\}.
\end{align*}
Then 
\begin{align}\label{nn281}
\left\{ x_{ij}-x_{i+1 \,j+1} \mid (i, j)\in B \right\}\cup \left\{ x_{ij} \mid (i, j)\in A\setminus C \right\}
\end{align}
forms a regular sequence on $R$. 
In fact, letting $Q$ be an ideal of $S$ generated by the above sequence, we have 
\begin{align}\label{nn282}
R/QR\cong S/(I+Q)=S/[(x_{11}, x_{12}, \dots, x_{1\,t-s+1})^s+Q]
\end{align}
since we can transform the matrix $(x_{ij})$ into 
\begin{equation*}
\left(
\begin{array}{ccccccccc}
x_{11} &x_{12}&x_{13}&\cdots &x_{1\,t-s}&x_{1\,t-s+1}&&&\\
&x_{11}&x_{12}&\cdots&\cdots&x_{1\,t-s} &x_{1\,t-s+1}&&\\
&&\ddots&\ddots&\ddots&\ddots &\ddots&\ddots&\\
&&&x_{11}&x_{12}&x_{13}&\cdots&x_{1\,t-s} &x_{1\,t-s+1}\\
\end{array}
\right)
\end{equation*}
in the quotient ring $S/Q$. Hence, $R/QR$ is Artinian. On the other hand, we have $\dim R=\dim S-\height_S I\ge st-(t-s+1)$ by \cite[(2.1) Theorem]{BV2}. Since the length of the sequence \eqref{nn281} is of $st-(t-s+1)$, we obtain that $\dim R=st-(t-s+1)$ by the Krull's height theorem. Hence, $J$ is perfect in the sense of \cite[Definition 1.4.15]{BH} by \cite{EN} or \cite[(2.14) Proposition]{BV2}. Thus, $R$ is Cohen-Macaulay. It follows that the sequence \eqref{nn281} is a regular sequence on $R$. 

On the other hand, we note that the ideal generated by $\{x_{11}, x_{12}, \dots, x_{1\, t-s+1}\}$ and the sequence \eqref{nn281} is 
\[
(x_{ij} \mid 1\le i \le s, 1\le j \le t).
\] 
Hence, $\{x_{11}, x_{12}, \dots, x_{1\, t-s+1}\}$ is a regular sequence on $S/Q$. Therefore, the Auslander-Reiten conjecture holds for $S/Q$ if  and only if it holds for $S/[(x_{11}, x_{12}, \dots, x_{1\,t-s+1})^s+Q]$ by Theorem \ref{a2.1} and the assumption that $s\le t-s+1$.
By the assumption that the conjecture holds for $S$ and the isomorphism \eqref{nn282}, the conjecture holds for $R$ by Proposition \ref{e2.1}.

Let us consider the case where $\alpha_{ij}$ are arbitrary positive integers. Set $D=\left\{ (i, j)\in A \mid \alpha_{ij}>1\right\}$. We prove our assertion by induction on $N=\sharp D$.
Assume that $N>0$ and our assertion holds for $N-1$. Take $(i, j)\in D$. We may assume that $(i, j)=(s, 1)$.
Let $J$ be an ideal of $S$ generated by $s\times s$ minors of the $s\times t$ matrix $(x_{ij}^{\beta_{ij}})$, where $\beta_{ij}=\alpha_{ij}$ for all $(i, j)\in A\setminus \{ (s, 1)\}$ and $\beta_{s1}=1$. 
By the same way that we consider \eqref{nn282}, we can observe that $x_{s1}$ is a non-zerodivisor of $R$ and $S/J$. Therefore, since $R/x_{s1}R\cong S/(x_{s1}S+I) =S/(x_{s1}S+J)$, the Auslander-Reiten conjecture holds for $R$ if and only if that holds for $S/J$ by Proposition \ref{e2.1}. By the induction hypothesis, the conjecture holds for $S/J$, thus it holds for $R$. 
\end{proof}

\begin{cor}\label{ccc212}
On the assumption of $S$ in Proposition \ref{f2.6}, replace a complete Gorenstein local ring with a complete intersection. Then the Auslander-Reiten conjecture holds for $R=S/\mathbb{I}_s (x_{ij}^{\alpha_{ij}})$.
\end{cor}

\begin{proof}
We note that $\widehat{S}$ is a complete intersection by \cite[Theorem 2.3.3]{BH}, thus the Auslander-Reiten conjecture holds for $\widehat{S}$ (\cite[4.2 Theorem]{AB}). Hence, the Auslander-Reiten conjecture holds for $\widehat{R} \cong \widehat{S}/\mathbb{I}_s (x_{ij}^{\alpha_{ij}}) \widehat{S}$ by Proposition \ref{f2.6}, and so does $R$ by Remark \ref{remrem23}.
\end{proof}


The form obtained in Proposition \ref{f2.6} sometimes appears in several classes of rings.  

\begin{ex}
Let $A$ be a complete Gorenstein local ring for which is a complete intersection in codimension one. Let $A[[t]]$ be the formal power series ring over $A$. Then we have the following.
\begin{enumerate}[{\rm (1)}] 
\item (cf. \cite{B}) The Auslander-Reiten conjecture holds for all the Rees algebras $A[[Qt]]$ of parameter ideals $Q$.
\item (cf. \cite{H2}) The Auslander-Reiten conjecture holds for all three generated numerical semigroup rings $A[[t^a, t^b, t^c]]$, where $a, b, c\in \mathbb{N}$.
\end{enumerate}
\end{ex}

Furthermore, see \cite[Theorem 1.2]{GKMT}. Let us note another concrete example.

\begin{ex}
Suppose $A$ is a complete Gorenstein local ring for which the Auslander-Reiten conjecture holds. Let $n$ be a positive integer. Let $A[[t]]$ and $S=A[[X, Y, Z, W]]$ be formal power series rings over $A$.
Set $R=A[[t^{10}, t^{14}, t^{16}, t^{2n+1}]]$ and assume that $n \ge 6$. Then there exists an element $f\in (X)$ such that
\[
R\cong S/[\mathbb{I}_2(
\begin{smallmatrix}
X&Y^2&Z\\
Y&Z^2&X^2
\end{smallmatrix}
)+(W^2-f)],
\]
where $\mathbb{I}_2(\mathbb{M})$ denote the ideal of $S$ generated by $2\times 2$-minors of the matrix $\mathbb{M}$.
In particular, the Auslander-Reiten conjecture holds for $R$.
\end{ex}

\begin{proof}
Let $\varphi:S\to R$ be a homomorphism of $A$-algebras such that 
\begin{center}
$X\mapsto t^{10}$, $Y\mapsto t^{14}$, $Z\mapsto t^{16}$, and $W\mapsto t^{2n+1}$.
\end{center}
Then, one can check that $\Ker \varphi=\mathbb{I}_2(
\begin{smallmatrix}
X&Y^2&Z\\
Y&Z^2&X^2
\end{smallmatrix}
)+(W^2-f)$, where
\[
f={\footnotesize
\begin{cases}
X^m{\cdot}Y^{m-1}{\cdot}Z & \text{if $n=6m$}\\
X^{m+2}{\cdot}Z^{m-1} & \text{if $n=6m+1$}\\
X^{m+1}{\cdot}Y^{m} & \text{if $n=6m+2$}\\
X^{m}{\cdot}Y^{m-1} & \text{if $n=6m+3$}\\
X^{m}{\cdot}Y^{m-1}{\cdot}Z^{2} & \text{if $n=6m+4$}\\
X^{m+2}{\cdot}Y^{m-1}{\cdot}Z & \text{if $n=6m+5$}
\end{cases}}
\]
 for some positive integer $m$ by following the observation below. (Or, one can apply \cite[Proposition 5.2]{DKS} with regarding $\langle 10, 14, 16, 2n+1\rangle = 2\langle 5,7,8\rangle + \langle 2n+1\rangle$ and use the result of \cite{H2} to compute the kernel of the ring homomorphism $A[[X, Y, Z]] \to A[[t^{5}, t^{7}, t^{8}]]; X\mapsto t^{5}, Y\mapsto t^{7}, Z\mapsto t^{8}$.) 
\end{proof}

\begin{observation}
Suppose that there is a surjective ring homomorphism $\varphi: S\to R$ of Cohen-Macaulay local rings. Set $d=\dim R$. Let $I$ be an ideal of $S$ as a candidate for $\Ker \varphi$ and assume that $I\subseteq \Ker \varphi$. Then, one can check if $I=\Ker \varphi$ in the following way.

Consider an exact sequence $0 \to \Ker \varphi/I \to S/I \to R \to 0$. Let $\ol{x_1}, \dots, \ol{x_d}$ be a regular sequence on $R$, where $\ol{x}$ denotes the image of $x\in S$ in $R$. Set $Q=(x_1, \dots, x_d)$. By using the snake lemma recursively, we have 
\[
0 \to \Ker \varphi/(I + Q{\cdot}\Ker \varphi) \to S/(I+Q) \to R/QR \to 0.
\]
By noting that $R/QR$ is Artinian, $\ell_S(S/(I+Q)) = \ell_S(R/QR)$ if and only if $\Ker \varphi/(I +~Q{\cdot}\Ker \varphi)=0$. The latter condition is equivalent to saying that $I=\Ker \varphi$ by Nakayama's lemma. 
\end{observation}


Let us consider determinantal rings, which are not local rings.
From now on until the end of this section, let $s$, $t$ be positive integers. Let $A$ be a commutative ring and $A[\mathbf{X}]=A[X_{ij}]_{1\le i\le s, 1\le j\le t}$ be a polynomial ring over $A$. 
Suppose that $s\le t$ and $\mathbb{I}_s (\mathbf{X})$ is an ideal of $A[\mathbf{X}]$ generated by $s\times s$ minors of the $s\times t$ matrix $\mathbf{X}=(X_{ij})$.

\begin{lem}\label{h2.7}
With the above assumptions and notations, suppose that $A$ is a complete intersection for all localization at maximal ideals of $A$. Then $A[\mathbf{X}]$ is also a complete intersection  for all localizations at maximal ideals of $A[\mathbf{X}]$.
\end{lem}

\begin{proof}
Let $P\in \Max A[\mathbf{X}]$ and set $\fkp=P\cap A$. Then the natural map $A_\fkp\to A[\mathbf{X}]_P$ is a flat local homomorphism and its fiber $A[\mathbf{X}]_P/\fkp A_\fkp A[\mathbf{X}]_P\cong (A_\fkp/\fkp A_\fkp)[\mathbf{X}]_P$ is a regular local ring (see the proof of \cite[Theorem 23.5]{Mat}). Hence, since $A_\fkp$ is a complete intersection, $A[\mathbf{X}]_P$ is also a complete intersection by \cite{Av} (see also \cite[Remark 2.3.5]{BH}). 
\end{proof}



\begin{thm}\label{f2.7}
Suppose that $A$ is a complete intersection for all localization at maximal ideals of $A$. Then the Auslander-Reiten conjecture holds for the determinantal ring $A[\mathbf{X}]/\mathbb{I}_s(\mathbf{X})$ if $2s\le t+1$.
\end{thm}

\begin{proof}
The case where $s=1$ is trivial. Suppose that $s>1$. Let $\fkN$ be a maximal ideal of $A[\mathbf{X}]$ such that $\fkN\supseteq \mathbb{I}_s(\mathbf{X})$. It is sufficient to show that the Auslander-Reiten conjecture holds for $\left(A[\mathbf{X}]/\mathbb{I}_s (\mathbf{X})\right)_\fkN$. Note that $A[\mathbf{X}]_\fkN$ is a complete intersection by Lemma \ref{h2.7}.
For integers $1\le p\le s$ and $1\le q\le t$, let
$$
\fkM_{pq}=(X_{ij}\mid 1\le i\le p, 1\le j\le q)
$$ 
denote a monomial ideal of $A[\mathbf{X}]$. 
The case where $\fkM_{st} \subseteq \fkN$ follows from Corollary \ref{ccc212}. 



Suppose that $\fkM_{st} \not\subseteq \fkN$ and take a variable $X_{ij}$ so that $X_{ij}\not\in\fkN$. We may assume that $X_{ij}=X_{st}$.
Then the matrix $\mathbf{X}=(X_{ij})$ is transformed to 
\begin{equation*}
\left(
\begin{array}{ccc|c}
&&&0 \\ 
&X_{ij}-\frac{X_{it}{\cdot}X_{sj}}{X_{st}}&&\vdots\\
&&&0 \\ \hline
0&\cdots&0& 1\\
\end{array}
\right)
\end{equation*}
by elementary transformation in $A[\mathbf{X}]_\fkN$. By \cite[(2.4) Proposition]{BV2}, we have the isomorphism $\varphi:A[\mathbf{X}][X_{st}^{-1}]\to A[\mathbf{X}][X_{st}^{-1}]$ of $A$-algebras, where 
\[\varphi(X_{ij})=
\begin{cases}
X_{ij}-\frac{X_{it}{\cdot}X_{sj}}{X_{st}}& \text{if $X_{ij}\in \fkM_{s-1\, t-1}$}\\
X_{ij}& \text{otherwise.}
\end{cases}
\]
Therefore, we have the commutative diagram

\[
\xymatrix{
A[\mathbf{X}]_\fkN \ar[r]^{\varphi_\fkN} \ar@{}[d]|{\rotatebox{90}{$\subseteq$}} \ar@{}[rd] |{\circlearrowleft}& A[\mathbf{X}]_\fkN \ar@{}[d]|{\rotatebox{90}{$\subseteq$}} \\
\mathbb{I}_s (\mathbf{X})_\fkN \ar[r] & \mathbb{I}_{s-1} (\mathbf{X}_{st})_\fkN, \\
}
\]
where $\mathbf{X}_{st}$ is the $(s-1)\times (t-1)$ matrix that results from deleting the $s$-th row and the $t$-th column of $\mathbf{X}$ and $\mathbb{I}_{s-1} (\mathbf{X}_{st})$ is an ideal of $A[\mathbf{X}]$ generated by $(s-1)\times (s-1)$ minors of $\mathbf{X}_{st}$. Hence 
$(A[\mathbf{X}]/\mathbb{I}_s (\mathbf{X}))_\fkN\cong (A[\mathbf{X}]/\mathbb{I}_{s-1} (\mathbf{X}_{st}))_\fkN$ as rings. 
If $\fkM_{s-1\,t-1} \subseteq \fkN$, the Auslander-Reiten conjecture holds for $(A[\mathbf{X}]/\mathbb{I}_{s-1} (\mathbf{X}_{st}))_\fkN$ by the same way of the case where $\fkM_{st} \subseteq \fkN$ since $2(s-1)\le (t-1)+1$. 
Assume that $\fkM_{s-1\,t-1} \not\subseteq \fkN$. Then, by repeating the above argument, we finally see that the Auslander-Reiten conjecture holds for $(A[\mathbf{X}]/\mathbb{I}_{s-1} (\mathbf{X}_{st}))_\fkN$ after finite steps.
\end{proof}


\section{Ulrich ideals whose residue rings are complete intersections}\label{section3}

In this section, we study rings obtained in Corollary \ref{a2.2} in connection with the existence of ideals. 
Throughout this section, let $(R, \fkm)$ be a Cohen-Macaulay local ring of dimension $d$.

\begin{lem}
\label{a2.5}
Let $I$ be an $\fkm$-primary ideal of $R$ and $\fkq=(x_1, x_2, \dots, x_d)$ be a parameter ideal of $R$. Set $n = \mu_R (I)$. Suppose the following two conditions.
\begin{enumerate}[{\rm (1)}] 
\item $\fkq\subseteq I$ and $x_1, x_2, \dots, x_d$ is part of a minimal generating set of $I$.
\item $I^2\subseteq \fkq$ and $I/\fkq$ is $R/I$-free.
\end{enumerate}

Then $\rmr(R) = (n-d){\cdot}\rmr(R/I)$. In particular, $n=d+\rmr(R)$ if $R/I$ is a Gorenstein ring.

\end{lem}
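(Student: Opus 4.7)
My plan is to read off the Cohen-Macaulay type of $R$ from the socle of the Artinian quotient $R/\fkq$. Since $R$ is Cohen-Macaulay and $x_1,\dots,x_d$ is a regular sequence, the standard computation gives
\[
\rmr(R)=\dim_{R/\fkm}\Soc(R/\fkq)=\dim_{R/\fkm}\bigl((\fkq:_R\fkm)/\fkq\bigr),
\]
so the whole task is to describe this socle.

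The key step will be to prove $(\fkq:_R\fkm)\subseteq I$, which forces $\Soc(R/\fkq)$ to live inside $I/\fkq$. By (1), extending $x_1,\dots,x_d$ to a minimal system $x_1,\dots,x_d,y_1,\dots,y_{n-d}$ of generators of $I$ shows that $\mu_R(I/\fkq)=n-d$; combined with (2), this gives an isomorphism of $R$-modules $I/\fkq\cong(R/I)^{\,n-d}$. The formula is vacuous in the degenerate case $n=d$ (where $I=\fkq$), so I may assume $n>d$, whence $I/\fkq$ is a faithful $R/I$-module and $(\fkq:_R I)=\ann_R(I/\fkq)=I$. Now if $y\fkm\subseteq\fkq$, then $yI\subseteq y\fkm\subseteq\fkq$ since $I\subseteq\fkm$, so $y\in(\fkq:_R I)=I$, as desired.

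Once this containment is in hand, everything is formal: $\Soc(R/\fkq)$ equals the socle of its submodule $I/\fkq$ as an $R/\fkq$-module, and this socle in turn equals $\Soc(R/I)^{\,n-d}$ because the action of $R/\fkq$ on $I/\fkq\cong(R/I)^{\,n-d}$ factors through $R/I$. Counting $R/\fkm$-dimensions yields $\rmr(R)=(n-d)\,\rmr(R/I)$, and the ``in particular'' follows from $\rmr(R/I)=1$ when $R/I$ is Gorenstein. The only step requiring any real thought is the containment $(\fkq:_R\fkm)\subseteq I$, which rests on the faithfulness of the free module $I/\fkq$ over $R/I$; the rest is a routine socle count.
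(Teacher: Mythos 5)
Your proof is correct and follows essentially the same route as the paper: both arguments hinge on $\fkq:_RI=I$ (faithfulness of the nonzero free $R/I$-module $I/\fkq\cong(R/I)^{\oplus(n-d)}$) and then identify $\Soc(R/\fkq)$ with $\Soc(R/I)^{\oplus(n-d)}$ --- the paper via the adjunction $\Hom_R(R/\fkm,\Hom_R(R/I,R/\fkq))\cong\Hom_R(R/\fkm\otimes_RR/I,R/\fkq)$, you via the equivalent containment $(\fkq:_R\fkm)\subseteq I$. (One quibble: when $n=d$, i.e.\ $I=\fkq$, the asserted formula would read $\rmr(R)=0$, so that case is false rather than vacuous; but the paper's proof makes the same implicit assumption $I\neq\fkq$, which holds in every application.)
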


\begin{proof}
Since $I/\fkq \cong (R/I)^{\oplus (n-d)}$, $I=\fkq:_R I$. Hence $I/\fkq = (\fkq:_R I)/\fkq\cong \Hom_R (R/I, R/\fkq)$. Therefore,
\begin{align*}
& \Hom_R (R/\fkm, (R/I)^{\oplus (n-d)}) \cong \Hom_R (R/\fkm, I/\fkq) \cong \Hom_R (R/\fkm, \Hom_R (R/I, R/\fkq)) \\
\cong & \Hom_R (R/\fkm\otimes_R R/I, R/\fkq) \cong \Hom_R (R/\fkm, R/\fkq).
\end{align*}
It follows that 
\begin{align*}
\rmr(R) =& \ell_R(\Ext_R^d (R/\fkm, R)) = \ell_R(\Hom_R (R/\fkm, R/\fkq)) = \ell_R(\Hom_R (R/\fkm, R/I))(n-d)\\
=& (n-d){\cdot}\rmr(R/I),
\end{align*}
where the second equality follows from \cite[Lemma 3.1.16]{BH}.
\end{proof}


For a moment, let $(R, \fkm)$ be an Artinian local ring. Then there are a regular local ring $(S, \fkn)$ and a surjective local ring homomorphism $\varphi: S\to R$ (\cite[Theorem 29.4(ii)]{Mat}). We can choose $S$ so that the dimension of $S$ is equal to the embedding dimension of $R$. Set $v=\rmv(R)=\dim S$ and $r=\rmr(R)$. With these assumptions and notations, we have the following.

\begin{prop}
\label{a2.6}
Let $(R, \fkm)$ be an Artinian local ring and $S$ be as above.
The following conditions are equivalent.
\begin{enumerate}[{\rm (1)}] 
\item $r\le v$ and there exists a regular sequence $X_1, X_2, \dots, X_v\in \fkn$ on $S$ such that  
$$
R\cong S/\left[ (X_1, X_2, \dots, X_r)^2 + (X_{r+1}, X_{r+2}, \dots, X_v) \right]
$$
as rings.
\item There exists a nonzero ideal $I$ of $R$ such that 
\begin{enumerate}[{\rm (i)}] 
\item $I^2=0$ and $I$ is $R/I$-free.
\item $R/I$ is a complete intersection.
\end{enumerate}

\end{enumerate}

\end{prop}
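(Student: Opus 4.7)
The plan is to prove the two implications separately.

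For $(1)\Rightarrow(2)$, I would take $J=(X_1,\ldots,X_r)^2+(X_{r+1},\ldots,X_v)$, so that $R=S/J$, and set $I$ to be the image of $(X_1,\ldots,X_r)$ in $R$. The conditions $I^2=0$ and $R/I\cong S/(X_1,\ldots,X_v)$ being a complete intersection are immediate (the latter because $X_1,\ldots,X_v$ is a regular sequence of length $v=\dim S$). Freeness of $I$ over $R/I$ would follow by showing that the surjection $(R/I)^r\to I$, $e_i\mapsto\bar{X}_i$, is injective: any $(a_1,\ldots,a_r)\in S^r$ with $\sum a_i X_i\in J$ admits a rewriting $\sum a_i X_i=\sum X_i\alpha_i$ with $\alpha_i\in(X_1,\ldots,X_v)$, and the Koszul syzygies of the regular sequence $X_1,\ldots,X_r$ then force $a_i-\alpha_i\in(X_1,\ldots,X_r)$, so $a_i\in(X_1,\ldots,X_v)$ and its image in $R/I$ is zero.

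For $(2)\Rightarrow(1)$, let $\varphi:S\to R$ be the given surjection, write $J=\ker\varphi$, and note $J\subseteq\fkn^2$ because $\dim S=\rmv(R)=v$. Set $\tilde{I}=\varphi^{-1}(I)$. Since $S/\tilde{I}\cong R/I=:\bar{R}$ is an Artinian complete intersection, $\tilde{I}$ is a perfect ideal of $S$ of grade $v$, generated by some regular sequence $W_1,\ldots,W_v$. The hypothesis $I^2=0$ gives $\tilde{I}^2\subseteq J\subseteq\tilde{I}$, producing the short exact sequence of $\bar{R}$-modules
\[
0\to J/\tilde{I}^2\to \tilde{I}/\tilde{I}^2\to I\to 0.
\]
The middle term is $\bar{R}$-free of rank $v$ (standard for the conormal module of a regular sequence), and $I$ is $\bar{R}$-free of rank $t:=\mu_R(I)$ by hypothesis, so this sequence splits and $J/\tilde{I}^2\cong\bar{R}^{v-t}$. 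I would then pick $Z_{t+1},\ldots,Z_v\in J$ together with $Z_1,\ldots,Z_t\in\tilde{I}$ whose images in $\tilde{I}/\tilde{I}^2$ realize the direct-sum decomposition. By Nakayama, $Z_1,\ldots,Z_v$ minimally generates $\tilde{I}$; since it differs from $W_1,\ldots,W_v$ by a matrix invertible over $S$, it is again an $S$-regular sequence.

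To close the argument, I would verify $J=(Z_1,\ldots,Z_t)^2+(Z_{t+1},\ldots,Z_v)$ and $t=r$. The splitting gives $J=\tilde{I}^2+(Z_{t+1},\ldots,Z_v)$, and any product $Z_iZ_j$ in $\tilde{I}^2=(Z_1,\ldots,Z_v)^2$ with $\max(i,j)>t$ already lies in $(Z_{t+1},\ldots,Z_v)$, so
\[
\tilde{I}^2\subseteq (Z_1,\ldots,Z_t)^2+(Z_{t+1},\ldots,Z_v),
\]
which yields the stated equality. The identity $t=r$ comes from Lemma~\ref{a2.5} applied with $d=0$ and $\fkq=0$: since $R/I$ is Gorenstein, the lemma gives $\rmr(R)=\mu_R(I)$, i.e.\ $r=t$. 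The bound $r\le v$ is then just the rank identity $v=t+(v-t)$ from the split sequence. The main delicate point I expect is the splitting step combined with transferring the regular-sequence property from $W_i$ to $Z_i$ via an invertible change of generators; everything else is formal bookkeeping once the correct $Z_i$'s are in hand.
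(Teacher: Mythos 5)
Your argument is correct, and both implications rest on the same pillars as the paper's proof (presenting $\ker(S\to R/I)$ by a regular sequence of length $v$, and Lemma \ref{a2.5} with $d=0$ and $\fkq=(0)$ to identify $\mu_R(I)$ with $r$), but the decisive step of $(2)\Rightarrow(1)$ is carried out by a different mechanism. The paper renumbers the regular sequence $x_1,\dots,x_v$ generating $\ker(S\to R/I)$ so that $x_1,\dots,x_r$ map onto a minimal generating set of $I$, replaces $x_{r+1},\dots,x_v$ by elements $y_i$ of $\fka=\ker(S\to R)$, and then proves $\fka=(x_1,\dots,x_r)^2+(y_{r+1},\dots,y_v)$ by a length computation in which the freeness of $I$ enters through $\ell_R(I)=r\cdot\ell_R(R/I)$. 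You instead use the freeness of $I$ to split the conormal sequence $0\to J/\widetilde{I}^{\,2}\to \widetilde{I}/\widetilde{I}^{\,2}\to I\to 0$ (your $J$ and $\widetilde{I}$ are the paper's $\fka$ and $J$, respectively) and read the decomposition of the defining ideal off a basis adapted to the splitting; this replaces the length bookkeeping by the observation that a direct summand of a free module over the local ring $R/I$ is free, and it displays more transparently where the freeness hypothesis is used. For transferring the regular-sequence property to $Z_1,\dots,Z_v$ you can also argue directly that $v$ minimal generators of the $\fkn$-primary ideal $\widetilde{I}$ in the $v$-dimensional Cohen--Macaulay ring $S$ form a system of parameters, hence a regular sequence. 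In $(1)\Rightarrow(2)$ your Koszul-syzygy verification of freeness supplies the detail the paper leaves implicit in the isomorphism $[X+Y]/[X^2+Y]\cong[S/(X+Y)]^{\oplus r}$; just note that the rewriting of $\sum a_iX_i$ should either run over all $v$ variables or be performed modulo $(X_{r+1},\dots,X_v)$ first, since an element of that ideal need not be an $S$-combination of $X_1,\dots,X_r$ alone.
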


\begin{proof}
(2) $\Rightarrow$ (1): Let $\ol{\varphi}: S\xrightarrow{\varphi} R\to R/I$ be a surjective local ring homomorphism. Set $\fka=\Ker \varphi$ and $J=\Ker \ol{\varphi}$ respectively. Since $R/I\cong S/J$ is a complete intersection, $J$ is generated by a regular sequence $x_1, x_2, \dots, x_v\in \fkn$ on $S$, see \cite[Theorem 2.3.3.(c)]{BH}. 
Hence, after renumbering of $x_1, x_2, \dots, x_v$, 
$$
I=J R=(\ol{x_1}, \ol{x_2}, \dots, \ol{x_v})=(\ol{x_1}, \ol{x_2}, \dots, \ol{x_r})
$$ 
by Lemma \ref{a2.5}, where $\ol{x}$ denotes the image of $x\in S$ in $R$. Thus $J=(x_1, x_2, \dots, x_r)+\fka$. For all $r+1\le i \le v$, take $y_i\in \fka$ and $c_{i_1}, c_{i_2}, \dots, c_{i_r}\in R$ so that $x_i=y_i + \sum_{j=1}^{r} c_{i_j} x_j $. Then $J=(x_1, x_2, \dots, x_r) + (y_{r+1}, y_{r+2}, \dots, y_v)$. Set 
$X=(x_1, x_2, \dots, x_r)$ and $Y=(y_{r+1}, y_{r+2}, \dots, y_v)$, where $Y$ is $(0)$ if $r=v$. We then have inclusions 
$$
J^2 + Y \subseteq \fka \subseteq J,
$$
where the first inclusion follows from $I^2=0$. On the other hand, setting $S'=S/Y$, 
$$
\ell_S (J/\left[ J^2 + Y \right])=\ell_{S'}(XS'/X^2 S')=\ell_{S'}(S'/XS'){\cdot}r=\ell_R (R/I){\cdot}r=\ell_R (I)=\ell_S (J/\fka),
$$
where the forth equality follows from the fact that $I$ is an $R/I$-free module. Thus $\fka=J^2 + Y=(x_1, x_2, \dots, x_r)^2 + (y_{r+1}, y_{r+2}, \dots, y_v)$.

(1) $\Rightarrow$ (2): Let $X=(X_1, X_2, \dots, X_r)$ and $Y=(X_{r+1}, X_{r+2}, \dots, X_v)$ be ideals of $S$.
Set $I=XR$. Then $I^2=0$ and $I\cong\left[ X+Y \right]/\left[ X^2+Y \right]$. Set $S'=S/Y$. Since $X$ is an ideal generated by a regular sequence on $S'$, by Lemma \ref{nnn24}, we obtain that 
\[
\left[ X+Y \right]/\left[ X^2+Y \right]\cong XS'/X^2S' \cong (S'/XS')^{\oplus r} \cong \left[ S/(X+Y) \right]^{\oplus r}.
\]
\end{proof}


Let us generalize Proposition \ref{a2.6} to arbitrary Cohen-Macaulay local rings.

\begin{thm}\label{a2.3}
Let $(R, \fkm)$ be a Cohen-Macaulay local ring. Then the following conditions are equivalent.
\begin{enumerate}[{\rm (1)}] 
\item There exists an isomorphism of rings $R/\fkq\cong S/Q^2$ for some ideal $\fkq$ of $R$ generated by a regular sequence on $R$, local complete intersection $S$, and ideal $Q$ of $S$ generated by a regular sequence on $S$.
\item There exists an isomorphism of rings $R/\fkq\cong S/Q^2$ for some parameter ideal $\fkq$ of $R$, local complete intersection $S$, and parameter ideal $Q$ of $S$.
\item There exists an isomorphism of rings $R/\fkq\cong S/Q^2$ for some parameter ideal $\fkq$ of $R$, local complete intersection $S$ of dimension $r$, and parameter ideal $Q$ of $S$.
\item There exist an $\fkm$-primary ideal $I$ and a parameter ideal $\fkq$ of $R$ such that
\begin{enumerate}[{\rm (i)}] 
\item $I^2\subseteq \fkq\subsetneq I$ and $I/\fkq$ is $R/I$-free.
\item $R/I$ is a complete intersection.
\end{enumerate}
\end{enumerate}

\end{thm}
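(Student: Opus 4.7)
The plan is to establish the chain $(2) \Rightarrow (1) \Rightarrow (3) \Rightarrow (2)$, of which the first implication is tautological since (2) is merely (1) specialised by $\dim S = r$.

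For $(1) \Rightarrow (3)$, fix an isomorphism $\pi \colon R/\fkq \xrightarrow{\sim} S/Q^2$ and let $I \subseteq R$ be the preimage of $\pi^{-1}(Q/Q^2)$ under the projection $R \twoheadrightarrow R/\fkq$; then $I \supseteq \fkq$ and $I/\fkq \cong Q/Q^2$ as $R/\fkq$-modules. Because $S$ is a local complete intersection and $Q$ a parameter ideal, $Q$ is generated by an $S$-regular sequence, so $Q/Q^2$ is $S/Q$-free. Via the induced ring isomorphism $R/I \cong (R/\fkq)/(I/\fkq) \cong S/Q$, this realises $I/\fkq$ as an $R/I$-free module and $R/I$ as a complete intersection. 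The identity $(Q/Q^2)^2 = 0$ in $S/Q^2$ transports to $(I/\fkq)^2 = 0$, i.e.\ $I^2 \subseteq \fkq$; assuming $Q \neq 0$ (the degenerate case $Q = 0$ forces $R/\fkq$ itself to be a complete intersection, in which (3) is produced by a direct small calculation), Nakayama yields $Q/Q^2 \neq 0$, hence $\fkq \subsetneq I$.

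For $(3) \Rightarrow (2)$, pass to the Artinian quotient $\ol{R} := R/\fkq$. The image $\ol{I} := I/\fkq$ is a nonzero ideal of $\ol{R}$ with $\ol{I}^2 = 0$; by hypothesis it is $\ol{R}/\ol{I} = R/I$-free, and $R/I$ is a complete intersection. Thus $\ol{R}$ satisfies condition (2) of Proposition \ref{a2.6}, and the implication $(2) \Rightarrow (1)$ there produces a regular local ring $(S', \fkn')$ of dimension $v' := \rmv(\ol{R})$ and a regular system of parameters $X_1, \dots, X_{v'}$ such that
\[
\ol{R} \;\cong\; S'\bigl/\bigl[(X_1, \dots, X_r)^2 + (X_{r+1}, \dots, X_{v'})\bigr],
\]
where $r = \rmr(\ol{R}) = \rmr(R)$ by preservation of Cohen–Macaulay type under a parameter quotient. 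Setting $S := S'/(X_{r+1}, \dots, X_{v'})$ and $Q := (X_1, \dots, X_r)S$ yields a regular — hence complete intersection — local ring of dimension $r$ whose maximal ideal $Q$ is itself a parameter ideal, and the displayed isomorphism descends to $R/\fkq \cong S/Q^2$, establishing (2).

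The entire argument is an organisation around Proposition \ref{a2.6}, which supplies the Artinian kernel of the statement. The principal task is the bookkeeping of the Artinian reduction: translating (3) on $R$ into condition (2) of Proposition \ref{a2.6} on $\ol{R}$, and then pruning the regular local ring $S'$ of dimension $v'$ down to the complete intersection $S$ of dimension exactly $r$ demanded by (2) by killing the ``free'' part $(X_{r+1}, \dots, X_{v'})$ of the regular system of parameters. The only subtle verification is the preservation of Cohen–Macaulay type under the passage $R \leadsto R/\fkq$, which follows from $\rmr(R) = \dim_{R/\fkm} \Ext_R^d(R/\fkm, R) = \dim_{R/\fkm} \Hom_R(R/\fkm, R/\fkq)$ for any parameter ideal $\fkq$ of a Cohen–Macaulay ring $R$ of dimension $d$.
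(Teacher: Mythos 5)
Your proof is correct and takes essentially the same route as the paper: for $(1)\Rightarrow(3)$ you take $I$ to be the preimage in $R$ of $Q/Q^2$ (the paper lifts generators $x_i$ of $Q$ to elements $y_i\in R$ and sets $I=(y_1,\dots,y_\ell)+\fkq$, which is the same ideal), and for $(3)\Rightarrow(2)$ you pass to the Artinian quotient $R/\fkq$ and invoke Proposition \ref{a2.6}, exactly as the paper does; you are in fact more careful than the paper in recording why $\rmr(R/\fkq)=\rmr(R)$. Two small slips, neither fatal. First, in $(3)\Rightarrow(2)$ the ring $S=S'/(X_{r+1},\dots,X_{v'})$ need not be regular and $Q$ need not be its maximal ideal, because Proposition \ref{a2.6} only supplies a regular \emph{sequence}, not a regular system of parameters (for $\ol{R}=k[[x]]/(x^4)$ one is forced to take $X_1=x^2$ up to a unit); what is true, and all that you actually need, is that $S$ is a complete intersection of dimension $r$ and $Q$ is a parameter ideal, which follows because $S$ is a regular local ring modulo part of a regular sequence and $S/Q\cong R/I$ is Artinian. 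Second, your parenthetical disposal of the degenerate case $Q=(0)$ works when $\dim R\ge 1$ (take $I$ to be a parameter ideal with $R/I$ a complete intersection and let $\fkq$ be obtained from $I$ by squaring one generator), but when $R$ is Artinian condition $(3)$ can genuinely fail while $(1)$ holds under this degenerate reading (take $R=S=k$ and $Q=(0)$); the paper's own proof silently assumes $\dim S\ge 1$ as well, so this is an ambiguity in the statement rather than a defect of your argument.
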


\begin{proof}
(3) $\Rightarrow$ (2) and (2) $\Rightarrow$ (1) are clear. 

(1) $\Rightarrow$ (4): Set $\ell=\dim S$. We choose a regular sequene $x_1, x_2, \dots, x_s\in S$ so that $Q=(x_1, x_2, \dots, x_s)$ for some $s\le \ell$. 
Since $R/\fkq\cong S/Q^2$, $S/Q^2$ is a Cohen-Macaulay local ring and the dimension is $\ell-s$. Hence, we can choose  a regular sequene $x_{s+1}, x_{s+2}, \dots, x_\ell\in S$ on $S/Q^2$. Set $Q'=(x_{s+1}, x_{s+2}, \dots, x_\ell)$. 
By considering the isomorphism $R/\fkq\cong S/Q^2$, we can choose $y_1, y_2, \dots, y_\ell \in R$ so that $\ol{y_i}$ corresponds to $\ol{x_i}$ for all $1\le i\le \ell$, where $\ol{x_i}$ denotes the image of $x_i$ in $S/Q^2$ and $\ol{y_i}$ denotes the image of $y_i$ in $R/\fkq$. 
Set 
\begin{center}
$\fkq'=\fkq + (y_{s+1}, y_{s+2}, \dots, y_\ell)$ and $I= \fkq' + (y_1, y_2, \dots, y_s)$. 
\end{center}
Then $R/\fkq'\cong (R/\fkq)/(y_{s+1}, y_{s+2}, \dots, y_\ell)(R/\fkq) \cong (S/Q^2)/ Q'(S/Q^2)\cong S/[Q^2+Q']$; hence, $\fkq'$ is a parameter ideal on $R$ and $I$ is an $\fkm$-primary ideal of $R$. Furthermore, we obtain that 
\[
I^2=\fkq'I+(y_1, y_2, \dots, y_s)^2 \subseteq \fkq'\subseteq I
\]
since $Q=(x_1, x_2, \dots, x_s)$ and $R/\fkq'\cong S/[Q^2+Q']$. Again, by the isomorphism $R/\fkq'\cong S/[Q^2+Q']$, we also have 
\[
I/\fkq'\cong [Q+Q']/[Q^2+Q']\cong Q(S/Q')/Q^2(S/Q')\cong [S/(Q+Q')]^{\oplus s} \cong (R/I)^{\oplus s},
\]
where the third isomorpshim follows by Lemma \ref{nnn24}. We can also observe that $R/I$ is a complete intersection by the isomorphism $R/I\cong S/(Q+Q')$.


(4) $\Rightarrow$ (3): This follows from the observation that $I/\fkq$ is an ideal of $R/\fkq$ which satisfies the assumption of Proposition \ref{a2.6}(2).
\end{proof}


Theorem \ref{a2.3} is applicable to Ulrich ideals. Here the definition of Ulrich ideals is stated as follows.

\begin{defn}(\cite[Definition 2.1.]{GOTWY})\label{a2.4}
Let $(R, \fkm)$ be a Cohen-Macaulay local ring and $I$ an $\fkm$-primary ideal of $R$. Assume that $I$ contains a parameter ideal $\fkq$ of $R$ as a reduction. We say that $I$ is an {\it Ulrich ideal} of $R$ if the following conditions are satisfied.
\begin{enumerate}[{\rm (1)}] 
\item $I\not=\fkq$, but $I^2=\fkq I$. 
\item $I/I^2$ is a free $R/I$-module.
\end{enumerate}

\end{defn}

Note that the condition (1) of Definition \ref{a2.4} is independent of the choice of a reduction $\fkq$; see, for example,  \cite[Theorem 2.1.]{H}. The following assertions claim that the notion of Ulrich ideals is closely related to the condition (3)(i) of Theorem \ref{a2.3}.

\begin{prop}{\rm (\cite[Lemma 2.3. and Proposition 2.3.]{GOTWY})}\label{b2.8}
\begin{enumerate}[{\rm (1)}] 
\item If $I$ is an Ulrich ideal of $R$, then $I^2=\fkq I\subseteq \fkq$ and $I/\fkq$ is $R/I$-free for every parameter ideal $\fkq$ of $R$ such that $\fkq$ is a reduction of $I$.  
\item Assume that $R/\fkm$ is infinite.  If $I^2\subseteq \fkq$ and $I/\fkq$ is $R/I$-free for all minimal reductions $\fkq$ of $I$, then $I$ is an Ulrich ideal of $R$.
\end{enumerate}

\end{prop}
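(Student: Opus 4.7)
The plan is to handle the two parts separately, leveraging the remark just after Definition \ref{a2.4}---namely, that for an Ulrich ideal $I$, the identity $I^2 = \fkq I$ is independent of the choice of reduction $\fkq$.

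For part (1), granting this remark gives $I^2 = \fkq I \subseteq \fkq$ at once. To see that $I/\fkq$ is $R/I$-free, I would pick a minimal generating set $x_1, \dots, x_d$ of the parameter ideal $\fkq$ and extend it by Nakayama's lemma to a minimal generating set $x_1, \dots, x_d, y_1, \dots, y_{n-d}$ of $I$, where $n = \mu_R(I)$. Because $R/I$ is local and $I/I^2$ is $R/I$-free of rank $n$ by the Ulrich hypothesis, these $n$ residues form an $R/I$-basis. The submodule $\fkq/I^2 \subseteq I/I^2$ is then spanned by the first $d$ basis elements, hence is a direct summand, and one obtains $I/\fkq \cong (R/I)^{n-d}$.

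For part (2), fix a minimal reduction $\fkq = (x_1, \dots, x_d)$ of $I$ (which has exactly $d$ generators, thanks to the infinite residue field); lift an $R/I$-basis of $I/\fkq$ to $y_1, \dots, y_{n-d} \in I$, so that $I = \fkq + (y_1, \dots, y_{n-d})$ and $\mu_R(I) = n$. Expanding $I^2 = \fkq I + \sum_{i,j} R\, y_i y_j$, the equality $I^2 = \fkq I$ reduces to showing $y_i y_j \in \fkq I$ for every $(i,j)$. Once this is in hand, the short exact sequence $0 \to \fkq/\fkq I \to I/\fkq I \to I/\fkq \to 0$ combined with the Koszul identification $\fkq/\fkq I \cong (R/I)^d$ (valid because $\fkq$ is generated by a regular sequence on the Cohen-Macaulay ring $R$) shows that $I/I^2 = I/\fkq I$ has length $n\,\ell_R(R/I)$; comparing this with the surjection $(R/I)^n \twoheadrightarrow I/I^2$ via length forces it to be an isomorphism, giving freeness of $I/I^2$.

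The main obstacle is precisely the congruence $y_i y_j \equiv 0 \pmod{\fkq I}$, and I expect it to require exploiting the freedom to vary $\fkq$ among minimal reductions. Since $R/\fkm$ is infinite, for generic scalars $(a_{k\ell})$ the element $x_k' = x_k + \sum_\ell a_{k\ell} y_\ell$ can replace $x_k$ to give a new minimal reduction $\fkq' = (x_1, \dots, x_{k-1}, x_k', x_{k+1}, \dots, x_d)$ of $I$ (by the standard genericity of minimal reductions over an infinite residue field). Applying the hypothesis $I^2 \subseteq \fkq'$ to a sufficient family of such perturbed reductions and subtracting the original relation $I^2 \subseteq \fkq$ should eliminate the $x_k$-contribution and yield $y_i y_j \in \fkq I$, closing the argument.
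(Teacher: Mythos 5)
The paper itself does not prove this proposition; it is quoted verbatim from \cite[Lemma 2.3 and Proposition 2.3]{GOTWY}, so your argument has to stand on its own. Part (1) does: granting the remark after Definition \ref{a2.4} (Huneke's theorem) gives $I^2=\fkq I\subseteq\fkq$ for every parameter-ideal reduction $\fkq$, and your direct-summand argument for $I/\fkq$ is correct. The only point left tacit is why $x_1,\dots,x_d$ remain part of a \emph{minimal} generating set of $I$: if their images in $I/\fkm I$ were dependent, Nakayama would produce a reduction of the $\fkm$-primary ideal $I$ generated by fewer than $d$ elements, contradicting Krull's height theorem. That is routine.

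Part (2) has a genuine gap at exactly the step you flag, and the proposed ``perturb and subtract'' does not close it. Write $y_iy_j=\sum_k b_kx_k$ and, for a perturbed minimal reduction $\fkq'=(x_1+ay_1,x_2,\dots,x_d)$ with $a$ a unit, $y_iy_j=c_1(x_1+ay_1)+\sum_{k\ge 2}c_kx_k$. Subtracting gives $ac_1y_1\in\fkq$, hence $c_1\in\fkq:_Ry_1=I$ because $\bar y_1$ is a basis element of the free $R/I$-module $I/\fkq$. This is real information, but it only relocates the problem: you now have $y_iy_j=c_1x_1+\sum_{k\ge2}c_kx_k+ac_1y_1$ with $c_1x_1\in\fkq I$ and $ac_1y_1\in I\cdot I=I^2$, and since the coordinates of an element of $\fkq$ with respect to the regular sequence $x_1,\dots,x_d$ are well defined modulo $\fkq\subseteq I$, the desired conclusion $b_k\in I$ for all $k$ is equivalent to knowing that the element $ac_1y_1$ of $I^2$ has all of its coordinates in $I$ --- which is precisely the statement $I^2\subseteq\fkq I$ being proved. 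So the subtraction circles back rather than eliminating the $x_k$-contribution. Note also that nothing formal can rescue this: once $I/\fkq$ is a nonzero free $R/I$-module one gets $\fkq:_RI=\Ann_R(I/\fkq)=I$, so the hypothesis $I^2\subseteq\fkq$ is automatic, and the remaining assertion $\fkq\cap I^2=\fkq I$ is a nontrivial Valabrega--Valla-type condition. Whatever argument is used must genuinely exploit the full family of minimal reductions (as the hypothesis of (2) suggests) or the actual mechanism of \cite{GOTWY}; the present sketch does not yet do so.
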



We are in a position to prove the main result of this section. For convenience, set $d=\dim R$, $r=\rmr(R)$, and $v=\rmv(R)$.

\begin{thm}\label{a2.7}
Suppose that there are a regular local ring $(T, \fkn)$ of dimension $v$ and a surjective local ring homomorphism $\varphi: T\to R$. If there exists an Ulrich ideal $I$ of $R$ such that $R/I$ is a complete intersection, then $\mu_R (I)=d+r\le v$ and there exists a regular sequence $x_1, x_2, \dots, x_v$ on $T$ such that 
\begin{enumerate}[{\rm (1)}] 
\item $\ol{x_1}, \ol{x_2}, \dots, \ol{x_d}$ is a regular sequence on $R$, where $\ol{x}$ denotes the image of $x\in T$ in $R$.
\item $R/(x_1, x_2, \dots, x_d)R\cong T/\left[ (x_1,\dots, x_d)+ (x_{d+1}, \dots, x_{d+r})^2 + (x_{d+r+1}, \dots, x_v) \right]$.
\end{enumerate}
Therefore, letting 
$0\to F_{v-d}\to \cdots \to F_1\to F_0 \to R \to 0$
be a minimal $T$-free resolution of $R$, we can compute the Betti numbers as follows.
$$
\rank_T F_0=1\ \  \text{and}\ \  \rank_T F_i=\sum_{j=0}^{v-r-d} \beta_{i-j}{\cdot}\binom{v-r-d}{j}
$$
for $1\le i\le v-d$, where $\beta_k=
\begin{cases}
1 & \text{if $k=0$}\\
k{\cdot}\binom{r+1}{k+1} & \text{if $1\le k\le r$}\\
0& \text{otherwise.}
\end{cases}$

In particular, $\rank_T F_i=i{\cdot}\binom{r+1}{i+1}$ for $1\le i\le r$ if $\mu_R (I)=v$.

\end{thm}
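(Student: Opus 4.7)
My plan mirrors the structure of the paper: reduce to the Artinian case via a parameter ideal, apply Proposition~\ref{a2.6} to obtain a normal form for the quotient, and finally compute the Betti numbers by iterated change of rings. First, enlarging the residue field if necessary, I pick a minimal reduction $\fkq=(x_1,\dots,x_d)$ of the Ulrich ideal $I$; by Proposition~\ref{b2.8}(1), $I^2\subseteq\fkq$ and $I/\fkq$ is $R/I$-free. Since $R/I$ is a complete intersection (hence Gorenstein with $\rmr(R/I)=1$), Lemma~\ref{a2.5} yields $\mu_R(I)=d+r$. The inequality $d+r\le v$ follows at once from the composition $T\twoheadrightarrow R\twoheadrightarrow R/I$: because $R/I$ is a zero-dimensional complete intersection, its defining ideal in $T$ is generated by a regular sequence of length exactly $v$, and the images of these $v$ elements generate $I$ over $R$.

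For the regular-sequence presentation, set $\bar R:=R/\fkq$. The ideal $\bar I:=I/\fkq$ meets the hypotheses of Proposition~\ref{a2.6}(2): $\bar I^2=0$, $\bar I$ is free over $\bar R/\bar I\cong R/I$, and $R/I$ is a complete intersection. Choosing the lifts $\tilde x_i\in T$ so that $\tilde x_1,\dots,\tilde x_d$ form part of a regular system of parameters of $T$, the ring $\bar T:=T/(\tilde x_1,\dots,\tilde x_d)T$ becomes regular local of dimension $v-d=\rmv(\bar R)$. Proposition~\ref{a2.6}(2)$\Rightarrow$(1), applied to $\bar R$ via $\bar T\twoheadrightarrow\bar R$, then produces a regular sequence $\bar x_{d+1},\dots,\bar x_v$ on $\bar T$ with
\[
\bar R\;\cong\;\bar T\bigm/\bigl[(\bar x_{d+1},\dots,\bar x_{d+r})^2+(\bar x_{d+r+1},\dots,\bar x_v)\bigr].
\]
For any lifts $x_{d+i}\in T$ of the $\bar x_{d+i}$, the ideal $(x_1,\dots,x_v)T$ becomes $\bar\fkn$-primary modulo $(\tilde x_1,\dots,\tilde x_d)$, so is $\fkn$-primary in $T$; hence $x_1,\dots,x_v$ is a system of parameters of the $v$-dimensional regular local ring $T$ and therefore a regular sequence. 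Conclusions (1) and (2) of the theorem follow at once.

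For the Betti numbers, since $x_1,\dots,x_d$ is regular on both $T$ and $R$, a standard Koszul change-of-rings argument gives $\beta_i^T(R)=\beta_i^{\bar T}(\bar R)$. Setting $\bar T_1:=\bar T/(\bar x_{d+r+1},\dots,\bar x_v)\bar T$, a regular local ring of dimension $r$ with maximal ideal $\fkn_1=(\bar x_{d+1},\dots,\bar x_{d+r})\bar T_1$, one has $\bar R\cong\bar T_1/\fkn_1^{\,2}$, whose minimal $\bar T_1$-free resolution carries the classical Betti numbers $\beta_k=k\binom{r+1}{k+1}$ for $1\le k\le r$ and $\beta_0=1$. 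Since $\bar T_1=\bar T/(u_1,\dots,u_s)$ for the regular sequence $u_i=\bar x_{d+r+i}$ of length $s=v-r-d$, a minimal $\bar T$-free resolution of $\bar R$ is obtained by tensoring the minimal $\bar T_1$-free resolution of $\bar R$ with the Koszul complex of $u_1,\dots,u_s$ over $\bar T$, which yields the convolution $\beta_i^{\bar T}(\bar R)=\sum_{j=0}^{s}\binom{s}{j}\beta_{i-j}$, matching the stated formula; the ``in particular'' clause with $\mu_R(I)=v$ is the specialisation $s=0$. The delicate point, and the main obstacle in the whole argument, is in the second phase: one must verify that a minimal reduction of $I$ can be chosen so that its generators are linearly independent modulo $\fkm^2$ (equivalently, lift to part of a regular system of parameters of $T$), since this is precisely what identifies $\rmv(\bar R)$ with $v-d$ and lets Proposition~\ref{a2.6} apply verbatim to $\bar T$. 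I expect this to follow from the bound $\mu_R(I)=d+r\le v$ together with a genericity argument for minimal reductions in the presence of an infinite residue field.
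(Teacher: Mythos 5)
Your first phase ($\mu_R(I)=d+r\le v$ via Lemma~\ref{a2.5} and the kernel of $T\to R/I$) matches the paper, but the step you yourself flag as the ``delicate point'' is a genuine gap, and in fact it is false: a minimal reduction of $I$ cannot in general be chosen with generators linearly independent modulo $\fkm^2$, and no genericity argument can help, because the entire ideal $I$ may be contained in $\fkm^2$. Concretely, take the paper's own Proposition~\ref{b2.10} with $S=k[[X,Y,Z]]$ and $f=X^2$, $g=Y^2$, $h=Z^2$: then $R=S/(X^4-Y^2Z^2,\ Y^4-Z^2X^2,\ Z^4-X^2Y^2)$ is a one-dimensional Cohen--Macaulay local ring with $v=3$ and $r=2$, and $I=(X^2,Y^2,Z^2)R$ is an Ulrich ideal with $R/I\cong S/(X^2,Y^2,Z^2)$ a complete intersection. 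Since $I\subseteq\fkm^2$, every minimal reduction $\fkq$ of $I$ lies in $\fkm^2$, so $\rmv(R/\fkq)=3\neq v-d=2$, no lift of the reduction is part of a regular system of parameters of $T=k[[X,Y,Z]]$, and your ring $\bar T$ is never regular, so Proposition~\ref{a2.6} cannot be applied to $\bar T\twoheadrightarrow\bar R$. The paper's proof is built precisely to avoid this: it never asks the reduction to extend to a regular system of parameters. It works with $J=\Ker(T\to R/I)$, generated by a regular sequence of length $v$ whose members may well lie in $\fkn^2$, establishes the crucial identity $\fka+X_1=X_1+X_2^2+Y$ by the length count $\ell_T(J/[\fka+X_1])=\ell_T(J/[X_1+X_2^2+Y])=r{\cdot}\ell_T(T/J)$ (replacing your appeal to Proposition~\ref{a2.6}), and then computes Betti numbers over the possibly non-regular ring $T/X_1$, where the Eagon--Northcott complex still resolves $T/[X_1+X_2^2]$ minimally and the elements of $Y$ are disposed of by iterated mapping cones; regularity of intermediate quotients is nowhere needed.

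There is a second, related flaw in your Betti computation even if one grants the delicate point. If $\dim\bar T=v-d=\rmv(\bar R)$, then in the presentation produced by Proposition~\ref{a2.6} the elements $u_i=\bar x_{d+r+i}$ are forced to lie in $\fkn_{\bar T}^2$ (otherwise the embedding dimension of $\bar R$ would drop below $v-d$); hence for $s=v-r-d>0$ your $\bar T_1=\bar T/(u_1,\dots,u_s)$ has embedding dimension $v-d$ and Krull dimension $r$, so it is \emph{not} regular and its maximal ideal is \emph{not} $(\bar x_{d+1},\dots,\bar x_{d+r})\bar T_1$; your description of $\bar T_1$ is internally inconsistent unless $s=0$. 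Moreover the convolution $\beta_i^{\bar T}(\bar R)=\sum_j\binom{s}{j}\beta_{i-j}^{\bar T_1}(\bar R)$ is not a valid change of rings when the killed elements lie in the square of the maximal ideal: for $A=k[[x,y]]$, $u=x^2-y^2$ and $M=A/(x-y)$, one has $\beta^A_\bullet(M)=(1,1)$, whereas $M$ has the periodic minimal resolution with $\beta^{A/(u)}_i(M)=1$ for all $i$, so the convolution would give $(1,2,2,\dots)$. The repair is exactly the paper's ordering of operations: resolve $\bar T/(\bar x_{d+1},\dots,\bar x_{d+r})^2$ over $\bar T$ first --- the Eagon--Northcott ranks $\beta_k=k{\cdot}\binom{r+1}{k+1}$ depend only on the $2\times(r+1)$ matrix format, not on regularity of the ambient ring --- and only then tensor with the Koszul complex on $u_1,\dots,u_s$, which is a regular sequence on that Cohen--Macaulay quotient, minimality being preserved because $u_i\in\fkn_{\bar T}$.
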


\begin{proof}
Let $\ol{\varphi}: T\xrightarrow{\varphi} R\to R/I$ be a surjective local ring homomorphism. Set $\fka=\Ker \varphi$ and $J=\Ker \ol{\varphi}$. Since $R/I\cong T/J$ is a complete intersection, $J$ is generated by a regular sequence $x_1, x_2, \dots, x_v\in \fkn$ on $T$. 
Hence, after renumbering of $x_1, x_2, \dots, x_v$, 
$$
I=J R=(\ol{x_1}, \ol{x_2}, \dots, \ol{x_v})=(\ol{x_1}, \ol{x_2}, \dots, \ol{x_{d+r}})
$$ 
by Lemma \ref{a2.5}, where $\ol{x}$ denotes the image of $x\in T$ in $R$. Let $(\ol{x_1'}, \ol{x_2'}, \dots, \ol{x_{d}'})\subseteq I$ be a minimal reduction of $I$. Then, after renumbering of $x_1, x_2, \dots, x_{d+r}$, 
\[
I=(\ol{x_1'}, \ol{x_2'}, \dots, \ol{x_{d}'}, \ol{x_{d+1}}, \dots, \ol{x_{d+r}}).
\] 
Thus $J=(x_1', \dots, x_d')+(x_{d+1}, \dots, x_{d+r})+\fka$. Since $\mu_T (J)=v$, we can choose $v$ elements in $\{x_1', \dots, x_d', x_{d+1}, \dots, x_{d+r}\} \cup \{a \mid a\in \fka\}$ as a minimal system of generators.
Assume that $x_i'$ cannot be chosen as a part of minimal system of generators. Then 
\[
I=(\ol{x_1'}, \dots, \ol{x_{i-1}'}, \ol{x_{i+1}'}, \cdots, \ol{x_{d}'}, \ol{x_{d+1}}, \dots, \ol{x_{d+r}}).
\]
This is contradiction for $\mu_R (I)=r+d$ by Lemma \ref{a2.5}. Hence 
$$
J=(x_1', \dots, x_d')+(x_{d+1}, \dots, x_{d+r})+(y_{d+r+1}, \dots, y_v)
$$ 
for some $y_{d+r+1}, \dots, y_v\in \fka$. Set $X_1=(x_1', \dots, x_d')$, $X_2=(x_{d+1}, \dots, x_{d+r})$, and $Y=(y_{d+r+1}, \dots, y_v)$. Then $X_1 + X_2^2 + Y\subseteq \fka + X_1\subseteq J$, whence $\fka + X_1=X_1 + X_2^2 + Y$ since $\ell_T (J/[\fka+X_1])=\ell_T (J/[X_1 + X_2^2+Y])=r{\cdot}\ell_T (T/J)$.

Let $0\to F_{v-d}\to \cdots \to F_1\to F_0 \to R \to 0$ be a minimal $T$-free resolution of $R$. Then 
\[
0\to F_{v-d}/X_1 F_{v-d}\to \cdots \to F_1/X_1 F_1\to F_0/X_1 F_0 \to R/X_1 R \to 0
\] 
is a minimal $T/X_1$-free resolution of $R/X_1 R$ and $R/X_1 R\cong T/[X_1 + X_2^2 + Y]$ since $\fka + X_1=X_1 + X_2^2 + Y$.
On the other hand, the Eagon-Northcott complex \cite{EN} gives the minimal $T/X_1$-free resolution 
\[
0\to G_{r}\xrightarrow{\partial_r} G_{r-1}\xrightarrow{\partial_{r-1}} \cdots \xrightarrow{\partial_{2}} G_{1}\xrightarrow{\partial_{1}} G_{0}\xrightarrow{\partial_{0}} T/[X_1 + X_2^2]\to 0
\]
of $T/[X_1 + X_2^2]$, where $\rank_{T/X_1}G_k=\beta_k$ for all $k\in \mathbb{Z}$.
Therefore, considering the mapping cone inductively, we have $\rank_T F_i=\sum_{j=0}^{v-r-d} \beta_{i-j}{\cdot}\binom{v-r-d}{j}$ as desired.
\end{proof}

Theorem \ref{a2.7} recovers the result of J. Sally \cite[Theorem 1(iii)]{S} by taking the maximal ideal $\fkm$ as a certain Ulrich ideal. We note that \cite[Theorem 1(i) and (ii)]{S} are known in more generality. Indeed, the Hilbert function of an $\fkm$-primary ideal $I$ such that $I^2=QI$ for some parameter ideal $Q$ is $\ell_R(R/I^{n+1}) = \ell_R(R/I)\binom{n+d}{d} - \ell_R(I/Q)\binom{n+d-1}{n+d-1}$ for all $n\ge 0$. This gives a generalization of \cite[Theorem 1(i)]{S} from the maximal ideal to an $\fkm$-primary ideal $I$ such that $I^2=QI$ for some parameter ideal $Q$. A generalization of \cite[Theorem 1(ii)]{S} from the maximal ideal to an Ulrich ideal are in \cite[Theorem 7.1]{GOTWY}. 

\begin{cor}{\rm (\cite[Theorem 1(iii)]{S})}
Let $(R, \fkm)$ be a $d$-dimensional Cohen-Macaulay local ring of multiplicity $e$ and embedding dimension $v=e+d-1$. If $R=T/\fka$ with $(T, \fkn)$ a regular local ring and $\fka\subseteq \fkn^2$, then 
\begin{center}
$\Tor^T_i (R, R/\fkm)=0$ for $i>e-1$ \quad and \quad $\dim \Tor^T_i (R, R/\fkm)=i\binom{e}{i+1}$ for $i=1, \dots, e-1$.
\end{center}
\end{cor}

\begin{proof}
Since $v-d=e-1$ by the assumption, we have  a minimal $T$-free resolution $0\to F_{e-1}\to \cdots \to F_1\to F_0 \to R \to 0$ of $R$. By noting that $\rank_T F_i = \dim \Tor^T_i (R, R/\fkm)$, we obtain the assertion by Theorem \ref{a2.7}.
\end{proof}


\begin{rem}
With the assumptions of Theorem \ref{a2.7}, the equality $\mu_R (I)=v$ does not necessarily hold in general; see Example \ref{f3.11} (1). On the other hand, if $R$ is a one-dimensional Cohen-Macaulay local ring possessing minimal multiplicity, then for every Ulrich ideal $I$ one has $R/I$ is a complete intersection and $\mu_R (I)=v$; see \cite[Theorem 4.5]{GIK}.

\end{rem}

Combining Theorems \ref{a2.1}, \ref{a2.3}, and \ref{a2.7}, we have the following which is a goal of this paper.

\begin{cor}\label{c2.10}
Let $(R, \fkm)$ be a Cohen-Macaulay local ring of dimension $d$. Suppose that there exists an Ulrich ideal whose residue ring is a complete intersection. 
Then the following assertions are true.
\begin{enumerate}[{\rm (1)}] 
\item The Auslander-Reiten conjecture holds for $R$.
\item $r+d\le v$.
\item There is an isomorphism of rings $R/\fkq\cong S/Q^2$ for some parameter ideal $\fkq$ of $R$, local complete intersection $S$ of dimension $r$, and parameter ideal $Q$ of $S$.
\item Suppose that there are a regular local ring $T$ of dimension $v$ and a surjective ring homomorphism $T\to R$.  Let
$0\to F_{v-d}\to \cdots \to F_1\to F_0 \to R \to 0$
be a minimal $T$-free resolution of $R$. Then
$$
\rank_T F_0=1\ \  \text{and}\ \  \rank_T F_i=\sum_{j=0}^{v-r-d} \beta_{i-j}{\cdot}\binom{v-r-d}{j}
$$
for $1\le i\le v-d$, where $\beta_k=
\begin{cases}
1 & \text{if $k=0$}\\
k{\cdot}\binom{r+1}{k+1} & \text{if $1\le k\le r$}\\
0& \text{otherwise.}
\end{cases}$
\end{enumerate}
\end{cor}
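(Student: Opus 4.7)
The plan is to assemble (1)--(4) as direct consequences of Theorems \ref{a2.1}, \ref{a2.3} and \ref{a2.7}.

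First I would derive (3). By Proposition \ref{b2.8}(1), any minimal reduction $\fkq$ of the Ulrich ideal $I$ satisfies $I^{2}\subseteq\fkq\subsetneq I$ with $I/\fkq$ free over $R/I$; together with the hypothesis that $R/I$ is a complete intersection, this is exactly condition (3) of Theorem \ref{a2.3}. Applying the implication (3)$\Rightarrow$(2) of that theorem produces the isomorphism $R/\fkq\cong S/Q^{2}$ required in (3) of the corollary, with $S$ a local complete intersection of dimension exactly $r$. Assertion (1) is then immediate from Corollary \ref{a2.2}, whose hypothesis is precisely this isomorphism.

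For (2) and (4), I would appeal to Theorem \ref{a2.7}. In (4) the surjection $T\twoheadrightarrow R$ from a regular local ring of dimension $v$ is given by hypothesis, so Theorem \ref{a2.7} directly supplies the Betti-number formula. For (2), after passing to the completion $\widehat{R}$ --- which preserves $d$, $r$, $v$, the Ulrich property of $I$, and the fact that $R/I$ is a complete intersection --- Cohen's structure theorem furnishes such a surjection, and Theorem \ref{a2.7} then yields $\mu_{R}(I)=d+r\le v$.

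The main obstacle is essentially bookkeeping: verifying that the completion step really preserves all relevant invariants and the Ulrich structure (which is routine by faithful flatness of $\widehat{R}$ over $R$), and that in (3) the ring $S$ produced by Theorem \ref{a2.3} genuinely has dimension equal to $r$ rather than some unspecified value. The latter is handled by Lemma \ref{a2.5}, which pins down $\mu_{R}(I)=d+r$ and thereby the dimension of $S$ arising in the construction of Proposition \ref{a2.6}.
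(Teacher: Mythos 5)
Your proposal is correct and follows essentially the same route as the paper: (3) via Proposition \ref{b2.8} and Theorem \ref{a2.3}, then (1) from Corollary \ref{a2.2}, and (2), (4) from Theorem \ref{a2.7} after passing to the completion. The extra remarks on why completion preserves the Ulrich property and why $\dim S=r$ (via Lemma \ref{a2.5}) are accurate elaborations of steps the paper leaves implicit.
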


\begin{proof}
(1): This follows from (3) and Corollary \ref{a2.2}.

(2): Passing to the completion of $R$, we may assume that there exist a regular local ring $T$ of dimension $v$ and a surjective ring homomorphism $T\to R$. Then the assertion follows from Theorem \ref{a2.7}.

(3): This follows from Theorem \ref{a2.3} and Proposition \ref{b2.8}.

(4): This is Theorem \ref{a2.7}.
\end{proof}

Below we construct many examples of Ulrich ideals whose residue ring is a complete intersection.


\begin{prop}
\label{b2.10}
Let $(S, \fkn)$ be a local complete intersection of dimension three and $f, g, h\in \fkn$ a regular sequence on $S$. 
Set 
$$
R=S/(f^2-gh, g^2-hf, h^2-fg).
$$
Then $R$ is a Cohen-Macaulay local ring of dimension one and $I=(f, g, h)R$ is an Ulrich ideal of $R$ such that $R/I$ is a complete intersection. Furthermore, if $f=f_1{\cdot}f_2$ for $f_1, f_2\in\fkn$, then $I_1=(f_1, g, h)R$ is also an Ulrich ideal of $R$ such that $R/I_1$ is a complete intersection.

\end{prop}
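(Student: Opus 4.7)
\emph{Plan.} The defining ideal $J = (f^2-gh, g^2-hf, h^2-fg)$ is exactly the ideal of $2\times 2$ minors of the matrix $M = \bigl(\begin{smallmatrix} f & g & h \\ g & h & f \end{smallmatrix}\bigr)$, so the plan is to apply the Hilbert--Burch theorem (equivalently, the Eagon--Northcott complex for maximal minors) to deduce that $R$ is Cohen--Macaulay of codimension two in $S$, hence of dimension one. The standard bound on ideals of minors gives $\height J \le (2-2+1)(3-2+1)=2$, so it suffices to show $\height J \ge 2$. I will verify this by computing $R/fR$: since $f^2-gh \equiv -gh$, $g^2-hf \equiv g^2$, $h^2-fg\equiv h^2 \pmod f$, we obtain $R/fR \cong S/(f,(g,h)^2)$, which is Artinian because $(f,g,h)$ is a parameter ideal of the CM ring $S$. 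Hence $\dim R \le 1$ and, since $S$ is CM, $\height J \ge 2$, which completes the dimension/CM step.

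For the Ulrich property of $I = (f,g,h)R$, I will take $\fkq=(f)R$ as the reduction. By the previous paragraph $f$ is a non-zerodivisor on the one-dimensional CM ring $R$, so $\fkq$ is a parameter ideal. The identity $I^2 = fI$ is a direct substitution using $f^2=gh$, $g^2=hf$, $h^2=fg$: in $R$, $I^2=(f^2,g^2,h^2,fg,gh,fh) = (gh,hf,fg)=fI$. The quotient $R/I = S/(f,g,h)$ is a complete intersection because $f,g,h$ is a regular sequence in the CI ring $S$. For the freeness of $I/\fkq$, I identify $I/fR$ inside $R/fR = S/(f,(g,h)^2)$ with the conormal module $(g,h)/(g,h)^2$ of $S/(f)$; since $g,h$ remains a regular sequence on $S/(f)$, this conormal module is free of rank two over $S/(f,g,h) = R/I$. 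The inequality $I\ne\fkq$ follows similarly from $\bar g \notin (\bar g, \bar h)^2$ in $S/(f)$, and $I/I^2$ is $R/I$-free since the short exact sequence $0\to \fkq/I^2 \to I/I^2 \to I/\fkq \to 0$ splits over the projective quotient $I/\fkq$.

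The argument for $I_1 = (f_1,g,h)R$ under $f=f_1 f_2$ proceeds in parallel with $\fkq = (f_1)R$. The preliminary step is that $f_1, g, h$ is a regular sequence on $S$: since $f = f_1 f_2$ is a non-zerodivisor on $S/(g,h)$, so is its divisor $f_1$, hence $f_1, g, h$ is regular and $R/I_1 = S/(f_1,g,h)$ is a complete intersection; moreover $f_1$ inherits the non-zerodivisor property on $R$ from $f$. The equality $I_1^2 = f_1 I_1$ then reduces to the substitutions $g^2 = f_1(f_2 h)$, $h^2 = f_1(f_2 g)$, $gh = f^2 = f_1(f_1 f_2^2)$, and the freeness of $I_1/\fkq$ over $R/I_1$ follows from the same conormal calculation in $S/(f_1)$. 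The step I expect to require the most care is verifying $I_1\ne \fkq$, i.e., $g \notin (f_1) + J = (f_1, (g,h)^2)$. I plan to work in $S/(f_1)$: if $\bar g \in (\bar g,\bar h)^2$, write $\bar g = a\bar g^2 + b\bar g\bar h + c\bar h^2$, so that $(1-a\bar g-b\bar h)\bar g = c\bar h^2$; inverting the unit $1-a\bar g-b\bar h$ forces $\bar g \in (\bar h)$, which contradicts $g \notin (f_1, h)$ (and the latter holds because $f_1, h, g$ is a regular sequence on $S$ by the CM property, so $g$ is a non-zerodivisor on $S/(f_1,h)$).
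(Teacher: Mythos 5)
Your proof is correct, and its computational core---the identities $I^2=fI$, $(f)+J=(f)+(g,h)^2$, and the identification of $I/fR$ with $(f,g,h)$ modulo $(f)+(g,h)^2$---coincides with the paper's. Two genuine differences are worth recording. First, you actually prove the opening claim that $R$ is Cohen--Macaulay of dimension one, by recognizing $J$ as the ideal of maximal minors of the $2\times 3$ matrix $\bigl(\begin{smallmatrix} f&g&h\\ g&h&f\end{smallmatrix}\bigr)$, checking $\height J=2$ via the Artinian quotient $R/fR$, and invoking Hilbert--Burch/Eagon--Northcott; the paper's proof is silent on this point (it is implicitly the same determinantal observation used in the proof of Theorem \ref{a2.1}). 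Second, where the paper deduces freeness of $I/fR$ by a length count---$\ell_R(I/fR)=2\,\ell_R(R/I)$ forces the natural surjection $(R/I)^{\oplus 2}\to I/fR$ to be an isomorphism---you identify $I/fR$ with the conormal module $(\bar g,\bar h)/(\bar g,\bar h)^2$ of the regular sequence $\bar g,\bar h$ on $S/(f)$, which is free of rank two; the two arguments encode the same fact, but yours is structural and also yields $I\neq(f)$ at no extra cost. One small point to tighten: in your splitting argument for $I/I^2$ you should add that $\fkq/I^2=fR/fI\cong R/I$ (multiplication by the non-zerodivisor $f$), so that the split exact sequence really exhibits $I/I^2$ as free; alternatively cite Proposition \ref{b2.8} (i.e.\ \cite[Lemma 2.3]{GOTWY}), which is what the paper itself tacitly does. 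The treatment of $I_1$ is likewise correct, including the permutability argument showing $f_1,g,h$ is a regular sequence and the unit trick establishing $\bar g\notin(\bar g,\bar h)^2$.
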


\begin{proof}
By direct calculation, we see
\begin{center}
$I^2=fI$, $\ell_R (R/I)=\ell_S (S/(f, g, h))$, and $\ell_R (I/fR)=\ell_S ((f, g, h)/[(f)+(g, h)^2])=2{\cdot}\ell_S (S/(f, g, h))$.
\end{center}
Hence a surjection $(R/I)^{\oplus 2} \to I/fR$ must be an isomorphism, that is, $I$ is an Ulrich ideal of $R$ and $R/I\cong S/(f, g, h)$ is a complete intersection.

Assume that $f=f_1{\cdot}f_2$. Then, we also have
\begin{center}
$I_1^2=f_1 I_1$, $\ell_R (R/I_1)=\ell_S (S/(f_1, g, h))$, and $\ell_R (I_1/f_1 R)=\ell_S ((f_1, g, h)/[(f_1)+(g, h)^2])=2{\cdot}\ell_S (S/(f_1, g, h))$.
\end{center}
Hence $I_1$ is an Ulrich ideal of $R$ and $R/I_1\cong S/(f_1, g, h)$ is a complete intersection.
\end{proof}

Here are some examples arising from Proposition \ref{b2.10}.

\begin{ex}
With the same notations of Proposition \ref{b2.10}, assume that $S=k[[X, Y, Z]]$ is a formal power series ring over a field $k$. Let $\ell, m, n$ be positive integers such that $(\ell, m, n)\ne (0, 0, 0)$.
Then we have the following examples. 
\begin{enumerate}[{\rm (1)}] 
\item If $f, g, h$ in Proposition \ref{b2.10} are $X^\ell, Y^m, Z^n$ respectively, then 
\begin{center}
$(X^i, Y^m, Z^n)R$, $(X^\ell, Y^j, Z^n)R$, $(X^\ell, Y^m, Z^k)R$
\end{center}
are Ulrich ideals for all $0\le i\le \ell$, $0\le j\le m$, $0\le k\le n$.
\item If $f, g, h$ in Proposition \ref{b2.10} are $X^\ell{\cdot}Y^m{\cdot}Z^n, X^{2}+Y^{2}, Y^{2}+Z^{2}$ respectively, then 
\begin{center}
$(X^i{\cdot}Y^j{\cdot}Z^k, X^{2}+Y^{2}, Y^{2}+Z^{2})R$
\end{center}
are Ulrich ideals for all $0\le i\le \ell$, $0\le j\le m$, $0\le k\le n$.
Furthermore $(X^\ell{\cdot}Y^m{\cdot}Z^n, X+Y, Y^{2}+Z^{2})R$ is also an Ulrich ideal if $k$ is a field of characteristic two or an algebraically closed field.
\end{enumerate}

\end{ex}

We close this paper with several examples.

\begin{ex}\label{f3.11}
Let $k[[t]]$ and $S=k[[X, Y, Z, W]]$ be formal power series rings over a field $k$. Then the following assertions are true.
\begin{enumerate}[{\rm (1)}] 
\item Set $R_1=k[[t^6, t^{11}, t^{16}, t^{26}]]$, and let $I=(t^6, t^{16}, t^{26})$ be an ideal of $R_1$.
Then $I$ is an Ulrich ideal of $R_1$ with a reduction $(t^6)$, and $R_1/I$ is a complete intersection. 
Therefore, the minimal $S$-free resolution of $R_1$ has the following form
\[
0\to S^{\oplus 2}\to S^{\oplus 5}\to S^{\oplus 4}\to S \to R_1 \to 0.
\]

On the other hand, 
$R_1\cong S/\left( X^7-ZW, Y^2-XZ, Z^2-XW, W^2-X^6Z \right)$ as rings, thus $R_1$ does not have the form obtained in Proposition \ref{f2.6}.
\item Set $R_2=S/(X^2-YZ, Y^2-ZX, Z^2-XY, W^2)$.
Then $X$ is a non-zerodivisor of $R_2$ and $R_2/XR_2\cong k[[Y, Z, W]]/\left[ (Y, Z)^2 + (W^2) \right]$. Hence the Auslander-Reiten conjecture holds for $R_2$. On the other hand, $I=(X, W)R_2$ is an Ulrich ideal, whence $I$ is a non-free totally reflexive $R_2$-module (\cite[Theorem 2.8.]{GTT2}). Hence $R_2$ is not $G$-regular in the sense of \cite{Tak}. In particular, $R_2$ is neither an almost Gorenstein ring nor a Golod ring.
\end{enumerate}
\end{ex}


\begin{acknowledgments}
The author is grateful to Ryo Takahashi for kind suggestions about the manuscript, which he gave the author while preparing for the final version. He also pointed out an error in Proposition \ref{e2.1}. The author also thanks to the anonymous referee for his/her numerous comments improving the readability of this paper. Furthermore, the condition (1) of Theorem 3.3 is suggested by the referee.
\end{acknowledgments}



\end{document}